\newcolumntype{C}[1]{>{\centering\let\newline\\\arraybackslash\hspace{0pt}}m{#1}}
\newcommand{\graycell}{\cellcolor{black!10}}
\theoremstyle{definition}
\newtheorem{theorem}{Theorem}[section]
\newtheorem{remark}[theorem]{Remark}
\newtheorem{lemma}[theorem]{Lemma}
\newtheorem{corollary}[theorem]{Corollary}
\newtheorem{matching}[theorem]{Matching}
\newenvironment{breakmatching}[1][]{\begin{matching}[#1]\ \\[-0.4cm]}{\end{matching}}
\newenvironment{provedcorollary}
  {\pushQED{\qed}\begin{corollary}}
  {\popQED\end{corollary}}
\newenvironment{customthm}[1]
  {\innercustomthm}
  {\endinnercustomthm}
\newenvironment{customcor}[1]
  {\innercustomcor}
  {\endinnercustomthm}
\newcommand{\Z}{\mathbb{Z}}
\newcommand{\Q}{\mathbb{Q}}
\newcommand{\M}{\mathcal{M}}
\newcommand{\W}{{\bf W}}
\newcommand{\G}[1]{\bf G_{#1}}
\newcommand{\GW}{{\bf G_W}}
\newcommand{\GWn}[1]{\bf G_{W_{#1}}}
\newcommand{\FW}{{\bf F_W}}
\newcommand{\de}{\partial}
\DeclareMathOperator{\rk}{rk}
\newcommand{\XW}{\mathbf X_{\mathbf W}}
\newcommand{\xor}{\veebar}
\renewcommand{\pmod}[2][]{\;#1(\text{mod}\; #2#1)}
\title{On the local homology of Artin groups of finite and affine type}
\author{Giovanni Paolini}
\begin{document}

\begin{abstract}
  We study the local homology of Artin groups using weighted discrete Morse theory.
  In all finite and affine cases, we are able to construct Morse matchings of a special type (we call them ``precise matchings'').
  The existence of precise matchings implies that the homology has a square-free torsion.
  This property was known for Artin groups of finite type, but not in general for Artin groups of affine type.
  We also use the constructed matchings to compute the local homology in all exceptional cases, correcting some results in the literature.
\end{abstract}

\maketitle

\setcounter{tocdepth}{1}
\tableofcontents

\section{Introduction}

In this paper we study the local homology of Artin groups with coefficients in the Laurent polynomial ring $R = \Q[q^{\pm 1}]$, where each standard generator acts as a multiplication by $-q$.
This homology has been already thoroughly investigated for groups of finite type \cite{frenkel1988cohomology, de1999arithmetic, de2001arithmetic, callegaro2005cohomology, salvetti2015some, paolini2017weighted}, also with integral coefficients \cite{callegaro2004integral, callegaro2006homology}, and for some groups of affine type \cite{callegaro2008cohomology, callegaro2008cohomology2, callegaro2010k, salvetti2013combinatorial, paolini2017weighted}.
This work is meant to be a natural continuation of \cite{paolini2017weighted}, and is based on the combinatorial techniques developed in \cite{salvetti2013combinatorial, paolini2017weighted}.

In \cite{salvetti2013combinatorial} Salvetti and Villa introduced a new combinatorial method to study the homology of Artin groups, based on discrete Morse theory.
They described the general framework, and made explicit computations for all exceptional affine groups.
In \cite{paolini2017weighted} the author and Salvetti developed the theory further, and made explicit computations for the affine family $\smash{\tilde C_n}$ as well as for the (already known) families $A_n$, $B_n$ and $\smash{\tilde A_n}$.
A common ingredient emerged in all the cases considered there, namely \emph{precise matchings}.
It was shown that whenever an Artin group admits precise matchings, then its local homology has a square-free torsion.
For Artin groups of finite type, the absence of higher powers in the torsion is a consequence of the isomorphism with the (constant) homology of the corresponding Milnor fiber \cite{callegaro2005cohomology}.
This geometric argument does not apply to Artin groups of infinite type, but precise matchings proved to be useful also beyond the finite case.

In the present work we show that precise matchings exist for all Artin groups of finite and affine type.
As we said, this implies the square-freeness of the torsion in the local homology.
The elegance of this conclusion seems to hint at some unknown deeper geometric reason.
The main results, stated in Section \ref{sec:main-theorem}, are the following.

\begin{customthm}{\ref{thm:main-theorem}}
  Every Artin group of finite or affine type admits a $\varphi$-precise matching for each cyclotomic polynomial $\varphi$.
\end{customthm}

\begin{customcor}{\ref{cor:main-corollary}}
  Let $\GW$ be an Artin group of finite or affine type.
  Then the local homology $H_*(\XW; R)$ has no $\varphi^k$-torsion for $k\geq 2$.
\end{customcor}

We are able to use precise matchings to carry out explicit homology computations for all exceptional finite and affine cases. In particular we recover the results of \cite{de1999arithmetic,salvetti2013combinatorial}, with small corrections.
The matchings we find for $D_n$, $\smash{\tilde B_n}$, and $\smash{\tilde D_n}$ are quite complicated, so we prefer to omit explicit homology computations for these cases (the homology for $D_n$ and $\smash{\tilde B_n}$ was already computed in \cite{de1999arithmetic} and \cite{callegaro2010k}, respectively).
The remaining finite and affine cases, namely $A_n$, $B_n$, $\smash{\tilde A_n}$ and $\smash{\tilde C_n}$, were already discussed in \cite{paolini2017weighted}.

We also provide a software library which can be used to generate matchings for any finite or affine Artin group, check preciseness, and compute the homology.
Source code and instructions are available online \cite{precise-matchings}.

This paper is structured as follows.
In Section \ref{sec:framework} we review the general combinatorial framework developed in \cite{salvetti2013combinatorial, paolini2017weighted}. We introduce the local homology $H_*(\XW; R)$, which is the object of our study, together with algebraic complexes to compute it. We present weighted discrete Morse theory and precise matchings, and recall some useful results.
In Section \ref{sec:main-theorem} we state and discuss the main results of this paper. Subsequent sections are devoted to the proof of the main theorem.
In Section \ref{sec:reduction-to-irreducible} we show that it is enough to construct precise matchings for irreducible Artin groups.
In Section \ref{sec:weights} we recall the computation of the weight of irreducible components of type $A_n$, $B_n$ and $D_n$, which is used later.
In Sections \ref{sec:An}-\ref{sec:I2} we construct precise matchings for the families $A_n$, $D_n$, $\smash{\tilde B_n}$, $\smash{\tilde D_n}$ and $I_2(m)$.
Finally, in Section \ref{sec:exceptional-cases} we deal with the exceptional cases.

\section{Local homology of Artin groups via discrete Morse theory}
\label{sec:framework}

In this section we are going to recall the general framework of \cite{salvetti2013combinatorial, paolini2017weighted} for the computation of the local homology $H_*(\XW; R)$.

Let $(\W, S)$ be a Coxeter system on a finite generating set $S$, and let $\Gamma$ be the corresponding Coxeter graph (with $S$ as its vertex set).
Denote by $\GW$ the corresponding Artin group, with standard generating set $\Sigma = \{ g_s \mid s \in S\}$.
Define $K_\W$ as the (finite) simplicial complex over $S$ given by
\[ K_\W = \{ \sigma \subseteq S \mid \text{the parabolic subgroup $\W_{\!\sigma}$ generated by $\sigma$ is finite} \}. \]
It is convenient to include the empty set $\varnothing$ in $K_\W$.
Let $\XW$ be the quotient of the Salvetti complex of $\W$ by the action of $\W$. This is a finite (non-regular) CW complex, with polyhedral cells indexed by $K_\W$.
It has $\GW$ as its fundamental group, and it is conjectured to be a space of type $K(\GW, 1)$ \cite{salvetti1987topology, salvetti1994homotopy, paris2012k}.
This conjecture is known to be true for all groups of finite type \cite{deligne1972immeubles} and for some families of groups of infinite type, including the affine groups of type $\smash{\tilde A_n}$, $\smash{\tilde B_n}$, and $\smash{\tilde C_n}$ \cite{okonek1979dask, callegaro2010k}.

Consider the action of the Artin group $\GW$ on the ring $R = \Q[q^{\pm 1}]$ given by
\[ g_s\ \mapsto \  [\text{multiplication by } -q]\quad \forall s\in S. \]
We are interested in studying the local homology $H_*(\XW; R)$, with coefficients in the local system defined by the above action of $\GW = \pi_1(\XW)$ on $R$.
Whenever $\XW$ is a $K(\GW, 1)$ space, this coincides with the group homology $H_*(\GW; R)$ with coefficients in the same representation.

The local homology $H_*(\XW; R)$ is computed by the algebraic complex
\[ C_k = \, \bigoplus_{\mathclap{\substack{\sigma \in K_\W\\
      |\sigma|=k}}}
    \, R\, e_\sigma \]
with boundary
\[  \partial(e_\sigma) \, = \, \sum_{\mathclap{\substack{
  \tau \lhd \sigma }}}
  \; [\sigma:\tau]\, \frac{\W_{\!\sigma}(q)}{\W_{\!\tau}(q)}\, e_\tau, \]
where $\W_{\!\sigma}(q)$ is the Poincaré polynomial of the parabolic subgroup $\W_{\!\sigma}$ of $\W$.

Let $C^0_*$ be the $1$-shifted algebraic complex of free $R$-modules which computes the reduced simplicial homology of $K_\W$ with (constant) coefficients in $R$. Namely:
\[ C^0_k = \, \bigoplus_{\mathclap{\substack{\sigma \in K_\W\\
      |\sigma|=k}}}
    \, R\, e^0_\sigma \]
with boundary
\[  \partial^0(e^0_\sigma) \, = \, \sum_{\mathclap{\substack{
  \tau \lhd \sigma }}}
  \; [\sigma:\tau]\, e^0_\tau. \]
Then we have an injective chain map $\Delta\colon C_* \to C^0_*$ defined by
\[ e_\sigma \mapsto \W_{\!\sigma}(q) \, e^0_\sigma. \]
Therefore there is an exact sequence of complexes:
\[ 0 \longrightarrow C_* \xlongrightarrow{\Delta} C^0_* \xlongrightarrow{\pi} L_* \longrightarrow 0, \]
where
\[ L_k \, = \, \bigoplus_{\mathclap{\substack{
    \sigma \in K_\W \\
    |\sigma| = k}}}
  \; \frac{R}{(\W_{\!\sigma}(q))}\; \bar{e}_\sigma, \]
with boundary induced by the boundary of $C^0_*$.
The associated long exact sequence in homology then allows to compute the homology of $C_*$ in terms of the homology of $C^0_*$ and of $L_*$:
\begin{equation*}
  \dots \xrightarrow{\pi_*} H_{k+1}(L_*) \xrightarrow{\delta} H_k(C_*) \xrightarrow{\Delta_*} H_k(C_*^0) \xrightarrow{\pi_*} H_k(L_*) \xrightarrow{\delta} H_{k-1}(C_*) \xrightarrow{\Delta_*} \dots
\end{equation*}

In this paper we mostly focus on Artin groups of finite and affine type, for which $K_\W$ is either the full simplex (in the finite case) or its boundary (in the affine case). In the former case $H_*(C^0_*)$ is trivial, and in the latter case the only non-trivial term is $H_{|S|-1}(C^0_*) \cong R$.
Therefore the challenging part consists in understanding the homology of the complex $L_*$, which encodes all the torsion.

Poincaré polynomials of Coxeter groups are products of cyclotomic polynomials $\varphi_d$ with $d\geq 2$. Thus the complex $L_*$ decomposes as a direct sum of $\varphi$-primary components $(L_*)_\varphi$, where $\varphi=\varphi_d$ varies among the cyclotomic polynomials.
Each component $(L_*)_\varphi$ takes the following form:
\[ (L_k)_\varphi \, = \, \bigoplus_{\mathclap{\substack{
    \sigma \in K_\W \\
    |\sigma| = k}}}
  \; \frac{R}{(\varphi^{v_\varphi(\sigma)})}\; \bar{e}_\sigma \]
where $v_\varphi(\sigma)$ is the multiplicity of $\varphi$ in the factorization of $\W_{\!\sigma}(q)$.
Again, the boundary in $(L_*)_\varphi$ is induced by the boundary of $C^0_*$.
The homology of $L_*$ decomposes accordingly, so our goal is to study $H_*((L_*)_\varphi)$ for each cyclotomic polynomial $\varphi$.

The main tool we are going to use is algebraic Morse theory for weighted complexes.
We recall here the main points of the theory, referring to \cite{forman1998morse, chari2000discrete, batzies2002discrete, kozlov2008combinatorial, salvetti2013combinatorial} for more details.
Let $G$ be the incidence graph of $K_\W$, i.e.\ the graph having $K_\W$ as its vertex set and with a directed edge $\sigma \to \tau$ whenever $\tau \lhd \sigma$.
We still call \emph{simplices} the vertices of $G$, to avoid confusion with the vertices of $K_\W$.
A matching $\M$ on $G$ is a set of edges of $G$ such that each simplex $\sigma \in K_\W$ is adjacent to at most one edge in $\M$.
We say that $\sigma$ is \emph{critical} (with respect to the matching $\M$) if none of its adjacent edges is in $\M$.
We also call \emph{alternating path} a sequence
\begin{equation*}
  (\tau_0 \,\lhd) \, \sigma_0\rhd \tau_1\lhd \sigma_1 \rhd \tau_2 \lhd \sigma_2 \rhd \dots \rhd \tau_{m} \lhd \sigma_m \,(\rhd\, \tau_{m+1})
\end{equation*}
such that each pair $\sigma_i \lhd \tau_i$ belongs to $\M$ and no pair $\sigma_i \lhd \tau_{i-1}$ belongs to $\M$.
An \emph{alternating cycle} is a closed alternating path.
The following are key definitions of the theory.
\begin{itemize}
 \item $\M$ is \emph{acyclic} if all alternating cycles are trivial.
 \item $\M$ is $\varphi$-\emph{weighted} if $v_\varphi(\sigma) = v_\varphi(\tau)$ whenever $(\sigma \to \tau) \in \M$.
\end{itemize}
Notice that in general, by construction, one has $v_\varphi(\sigma) \geq v_\varphi(\tau)$ for $\tau \lhd \sigma$.

\begin{theorem}[{\cite[Theorem 2]{salvetti2013combinatorial}}]
  Fix a cyclotomic polynomial $\varphi$. Let $\M$ be an acyclic and $\varphi$-weighted matching on $G$.
  Then the homology of $(L_*)_\varphi$ is the same as the homology of the \emph{Morse complex}
  \[ (L_*)_\varphi^\M \, = \bigoplus_{\sigma \text{ critical} } \frac{R}{(\varphi^{v_\varphi(\sigma)})} \ \bar{e}_{\sigma} \]
  with boundary
  \[ \de^\M(\bar e_\sigma) = \, \sum_{\mathclap{\substack{
    \tau \text{ critical}\\
    |\tau| \,=\, |\sigma| - 1}}} \; [\sigma:\tau]^\M \, \bar e_\tau, \]
  where $[\sigma:\tau]^\M \in \Z$ is given by the sum over all alternating paths
  \[ \sigma \vartriangleright \tau_1 \vartriangleleft \sigma_1 \vartriangleright \tau_2 \vartriangleleft \sigma_2 \vartriangleright \dots \vartriangleright \tau_m \vartriangleleft \sigma_m \vartriangleright \tau \]
  from $\sigma$ to $\tau$ of the quantity
  \[ (-1)^m [\sigma:\tau_1][\sigma_1:\tau_1][\sigma_1:\tau_2][\sigma_2:\tau_2] \cdots [\sigma_m:\tau_m][\sigma_m:\tau]. \]
\end{theorem}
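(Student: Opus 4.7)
The plan is to prove this theorem by algebraic discrete Morse theory, adapting the standard deformation-retraction argument to the setting of chain complexes whose summands are cyclic $R$-modules of the form $R/(\varphi^{v_\varphi(\sigma)})$. The key observation is that the $\varphi$-weighted condition is exactly what is needed to make the matching edges invertible at the level of individual summands, so that Gaussian elimination can be carried out formally.

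First I would decompose $(L_*)_\varphi = K_* \oplus M_* \oplus Q_*$ as a graded $R$-module, where $K_*$ is spanned by the critical simplices, $M_*$ by the matched-up simplices (those appearing as the source of an edge of $\M$), and $Q_*$ by the matched-down simplices (the targets). With respect to this decomposition the boundary $\partial$ of $(L_*)_\varphi$ splits into nine components; the component $\partial^{MQ}\colon M_k \to Q_{k-1}$ restricts on each matched pair $\sigma \rhd \tau$ to multiplication by $[\sigma:\tau] = \pm 1$. Since $v_\varphi(\sigma) = v_\varphi(\tau)$ by the weighting condition, the summands $R/(\varphi^{v_\varphi(\sigma)})\,\bar e_\sigma$ and $R/(\varphi^{v_\varphi(\tau)})\,\bar e_\tau$ are canonically isomorphic as $R$-modules and $[\sigma:\tau]$ becomes a unit between them.

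Next I would exploit acyclicity of $\M$ to promote this fiberwise invertibility to global invertibility of $\partial^{MQ}$. The matching provides a bijection between matched-up simplices in $M_k$ and matched-down simplices in $Q_{k-1}$; under the resulting identification, the diagonal of $\partial^{MQ}$ is the matrix of matched incidences (units), while the off-diagonal entries come from non-matched face relations. Acyclicity of $\M$ is equivalent to the existence of a linear extension of the partial order on matched pairs induced by these off-diagonal face relations, so $\partial^{MQ}$ can be arranged in upper-triangular form with invertible diagonal and is therefore invertible with a finite Neumann-type expansion for its inverse. A standard cone construction (equivalently, a perturbation-lemma argument on the associated filtered complex) then produces a chain homotopy equivalence between $(L_*)_\varphi$ and the subquotient complex with underlying module $K_*$ and induced differential
\[ \partial^\M = \partial^{KK} - \partial^{KQ}(\partial^{MQ})^{-1}\partial^{MK}. \]
Expanding $(\partial^{MQ})^{-1}$ as the aforementioned series and carefully tracking signs yields exactly the sum over alternating paths $\sigma \vartriangleright \tau_1 \vartriangleleft \sigma_1 \vartriangleright \dots \vartriangleleft \sigma_m \vartriangleright \tau$ of the quantity $(-1)^m [\sigma:\tau_1][\sigma_1:\tau_1]\cdots[\sigma_m:\tau_m][\sigma_m:\tau]$ appearing in the statement, where the sign $(-1)^m$ reflects the $m$-fold composition of the inverted boundary with the uninverted one.

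The main obstacle is the careful justification that the Neumann series terminates, i.e.\ that for any two critical simplices $\sigma$ and $\tau$ there are only finitely many alternating paths from $\sigma$ to $\tau$. This follows from acyclicity combined with the finiteness of $K_\W$: every infinite alternating path would have to revisit some matched pair, producing a non-trivial alternating cycle. Once this finiteness is in hand, the sign computation is bookkeeping and the chain-homotopy equivalence follows from the general algebraic Morse theory machinery.
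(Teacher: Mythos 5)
The paper does not prove this statement---it is quoted verbatim from \cite[Theorem 2]{salvetti2013combinatorial}---but your proposal is a correct rendering of the standard algebraic Morse theory argument (Gaussian elimination on the $K\oplus M\oplus Q$ decomposition, with the $\varphi$-weighted condition guaranteeing that matched incidences are isomorphisms between the cyclic summands and acyclicity plus finiteness of $K_\W$ guaranteeing triangularity and termination of the path expansion), which is essentially the route taken in the cited source. The only point worth making explicit is that the off-diagonal boundary components $R/(\varphi^{v_\varphi(\sigma)})\to R/(\varphi^{v_\varphi(\tau)})$ are well defined because $v_\varphi(\sigma)\ge v_\varphi(\tau)$ whenever $\tau\lhd\sigma$, so the only divisions ever required are along matched edges, where the weights agree.
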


In \cite{paolini2017weighted} a special class of weighted matchings was introduced, namely precise matchings.
We say that $\M$ is $\varphi$-\emph{precise} (or simply \emph{precise}) if $\M$ is acyclic and $\varphi$-weighted, and has the following additional property: $v_\varphi(\sigma) = v_\varphi(\tau) + 1$ whenever $[\sigma : \tau]^\M \neq 0$ (here $\sigma$ and $\tau$ are critical simplices, so that $[\sigma:\tau]^\M$ is defined).
This condition can be thought as a rigid (weight-consistent) maximality condition.
It appears to arise naturally in the study of the local homology of Artin groups, as shown in \cite{paolini2017weighted} and in the present work.

We refer to \cite[Section 4]{paolini2017weighted} for a general introduction to precise matchings.
Here we are only going to briefly recall what we need to study Artin groups.
Let $C_*^0(\Q)$ be the $1$-shifted algebraic complex of free $\Q$-modules which computes the reduced simplicial homology of $K_\W$ with coefficients in $\Q$ (this is the same as $C^0_*$, but with $\Q$ instead of $R$ in the definition). An acyclic matching $\M$ on $G$ can be used to compute a Morse complex $C_*^0(\Q)^\M$ of $C_*^0(\Q)$ as well.
Call $\delta_*^\M$ the boundary of the Morse complex $C_*^0(\Q)^\M$.

\begin{theorem}[{\cite[Theorem 5.1]{paolini2017weighted}}]
  Fix a cyclotomic polynomial $\varphi$. Let $\M$ be a $\varphi$-precise matching on $G$.
  Then the $\varphi$-torsion component of the local homology $H_*(\XW; R)$ in each dimension, as an $R$-module, is given by
  \[ H_m(\XW; R)_\varphi \cong \left( \frac{R}{(\varphi)} \right)^{\oplus \rk \delta_{m+1}^\M}. \]
\end{theorem}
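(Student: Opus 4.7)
The plan is to use the $\varphi$-precise matching to split the Morse complex $(L_*)_\varphi^\M$ into elementary blocks whose homology can be read off directly, and then transfer the answer to $H_*(\XW;R)$ via the long exact sequence arising from $0\to C_*\to C^0_*\to L_*\to 0$.

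First I would invoke the previously stated Morse theorem to reduce the problem to computing $H_*((L_*)_\varphi^\M)$, and then exploit preciseness: since $[\sigma:\tau]^\M\neq 0$ forces $v_\varphi(\sigma)=v_\varphi(\tau)+1$, the function $r(\sigma)=|\sigma|-v_\varphi(\sigma)$ is constant along $\partial^\M$. This yields a direct-sum decomposition $(L_*)_\varphi^\M=\bigoplus_{r}E^r_*$, where $E^r_{r+j}$ is supported on critical cells of dimension $r+j$ and weight $j$. Letting $V_j^r$ denote the $\Q$-span of the same cells, the integer matrices $[\sigma:\tau]^\M$ also turn $V_*^r=(V_j^r,d_j^r)$ into a $\Q$-chain complex, and $C^0_*(\Q)^\M=\bigoplus_r V_*^r$ as complexes.

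Next I would apply a Hodge-type splitting $V_j^r=B_j^r\oplus H_j^r\oplus C_j^r$ (with $d_j^r$ sending $C_j^r$ isomorphically onto $B_{j-1}^r=\Im d_j^r$ and vanishing elsewhere), decomposing each $V_*^r$ as a direct sum of two-term acyclic blocks $\Q\xrightarrow{\id}\Q$ (one per basis vector of each $C_j^r$) and one-term harmonic blocks $\Q$ (one per basis vector of each $H_j^r\cong H_j(V_*^r)$). The key point is that $\Q$-basis changes on $V_j^r$ lift to $R_j$-basis changes on $E^r_{r+j}$, so the splitting transfers to $E^r_*$: each acyclic block becomes the elementary complex $R/(\varphi^j)\to R/(\varphi^{j-1})$ (canonical reduction), whose homology is $R/(\varphi)$ concentrated in degree $r+j$, while each harmonic block becomes $R/(\varphi^j)$ with trivial boundary in degree $r+j$. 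Summing over $r$ and observing that $\sum_r\rk d_{m-r}^r=\rk\delta_m^\M$, one obtains
\[
H_m((L_*)_\varphi)\ \cong\ (R/(\varphi))^{\oplus\rk\delta_m^\M}\ \oplus\ \bigoplus_{r+j=m}(R/(\varphi^j))^{\oplus\dim H_j^r},
\]
where the harmonic summand on the right is nontrivial only in the top dimension of the affine case, in which $H_{|S|-1}(C^0_*(\Q))\cong\Q$ contributes a single copy of some $R/(\varphi^{j_0})$.

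Finally, the torsion-freeness of $H_*(C^0_*)$ together with the long exact sequence of $0\to C_*\to C^0_*\to L_*\to 0$ identifies
\[
H_m(\XW;R)_\varphi\ \cong\ \coker\bigl(\pi_*\colon H_{m+1}(C^0_*)\to H_{m+1}((L_*)_\varphi)\bigr).
\]
In every degree except the top affine one, $H_{m+1}(C^0_*)=0$ and the harmonic summand above is absent, so the claimed formula drops out immediately. The main obstacle I expect is the remaining affine case $m+1=|S|-1$: one must verify that $\pi_*$ carries a generator of $H_{|S|-1}(C^0_*)\cong R$ to a generator of the lone harmonic summand $R/(\varphi^{j_0})$ and vanishes on the $R/(\varphi)$-summands, so that the cokernel is again $(R/(\varphi))^{\rk\delta_{|S|-1}^\M}$. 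This should follow by naturality, since $\pi_\varphi$ is the fibrewise reduction $R\twoheadrightarrow R_j$ on each generator and therefore respects the Hodge decomposition, and the generator of $H_{|S|-1}(C^0_*)$ is by construction the unique harmonic cycle of $C^0_*(\Q)^\M$ extended $R$-linearly.
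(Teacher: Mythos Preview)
This theorem is not proved in the present paper; it is quoted as \cite[Theorem~5.1]{paolini2017weighted} and used as input. So there is no in-paper proof to compare your attempt against.

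That said, your outline is correct and is essentially the argument one would expect. One comment: you restrict the final long-exact-sequence step to the finite and affine cases and flag the top affine degree as a potential obstacle, but this is unnecessary --- the naturality argument you sketch works in full generality. Since the Morse reduction depends only on the matching and the incidence numbers, the map $\pi\colon C^0_*\to (L_*)_\varphi$ descends to the obvious map of Morse complexes (reduction $R\twoheadrightarrow R/(\varphi^{v_\varphi(\sigma)})$ on each critical generator), and this is compatible with your Hodge splitting because the same $\Q$-basis change is applied on both sides. On each two-term acyclic block the induced map on homology is zero (the source block $R\xrightarrow{1}R$ is acyclic), while on each harmonic block it is the surjection $R\twoheadrightarrow R/(\varphi^j)$. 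Hence $\pi_*$ always surjects onto the harmonic summand and misses the $(R/(\varphi))$-summands, so the cokernel is $(R/(\varphi))^{\oplus\rk\delta_{m+1}^\M}$ in every degree, regardless of the shape of $H_*(C^0_*)$.
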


\begin{theorem}[{\cite[Theorem 5.1]{paolini2017weighted}}]
  Suppose to have a $\varphi$-precise matching $\M_\varphi$ on $G$ for every cyclotomic polynomial $\varphi$.
  Then the local homology of $\XW$ in each dimension, as an $R$-module, is given by
  \[ H_m(\XW; R) \cong \left( \bigoplus_\varphi \left( \frac{R}{(\varphi)} \right)^{\oplus \rk \delta_{m+1}^{\M_\varphi}} \right) \oplus H_m(C^0_*). \]
  In particular the term $H_*(C^0_*)$ gives the free part of the homology, whereas the other direct summands give the torsion part.
  \label{thm:homology-artin-groups}
\end{theorem}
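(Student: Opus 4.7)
The plan is to combine the preceding theorem, which already pins down each $\varphi$-primary torsion summand, with a short rank argument identifying the free part of $H_m(\XW;R)$ with $H_m(C^0_*)$.

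First I would invoke the structure theorem for finitely generated modules over the PID $R=\Q[q^{\pm 1}]$ to decompose
\[ H_m(\XW;R) \cong F_m \oplus \bigoplus_\varphi H_m(\XW;R)_\varphi, \]
where $F_m$ denotes the free part and the sum runs over cyclotomic polynomials. For each $\varphi$ the hypothesis provides a $\varphi$-precise matching $\M_\varphi$, so the preceding theorem immediately yields $H_m(\XW;R)_\varphi \cong (R/(\varphi))^{\oplus \rk\delta_{m+1}^{\M_\varphi}}$. This handles the entire torsion part in one shot.

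To identify $F_m$ with $H_m(C^0_*)$ I would proceed in two steps. Since $R$ is a $\Q$-algebra and hence $\Z$-flat, the universal coefficient theorem gives $H_m(C^0_*) \cong \tilde H_{m-1}(K_\W;\Z)\otimes_\Z R$, and any $\Z$-torsion in $\tilde H_{m-1}(K_\W;\Z)$ dies upon tensoring with the $\Q$-algebra $R$, so $H_m(C^0_*)$ is a free $R$-module. Next I would tensor the short exact sequence $0\to C_* \xrightarrow{\Delta} C^0_* \to L_*\to 0$ with the fraction field $K=\Q(q)$. Each Poincaré polynomial $\W_{\!\sigma}(q)$ (with $\sigma \in K_\W$, hence $\W_{\!\sigma}$ finite) has nonzero constant term and is thus invertible in $K$, so $\Delta\otimes_R K$ is an isomorphism at the chain level; equivalently $L_*\otimes_R K=0$, which is clear since each $L_k$ is $R$-torsion. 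Consequently $\Delta_*\otimes_R K$ is an isomorphism on homology, and the free $R$-rank of $H_m(C_*)=H_m(\XW;R)$ equals the $R$-rank of the free module $H_m(C^0_*)$. Since $F_m$ is free of this same rank and $H_m(C^0_*)$ is itself free, we get $F_m\cong H_m(C^0_*)$ as $R$-modules.

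Substituting this identification into the decomposition above yields the formula in the statement. The only mildly delicate point is the freeness of $H_m(C^0_*)$, which is really what permits the free and torsion parts of the local homology to be presented as a clean direct sum; everything else is a routine assembly of the preceding theorem with the long exact sequence relating $C_*$, $C^0_*$, and $L_*$.
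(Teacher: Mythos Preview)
This theorem is quoted from \cite{paolini2017weighted} and the present paper gives no proof of it, so there is no in-paper argument to compare against. Your approach is a natural one and is essentially correct: invoke the preceding theorem for each cyclotomic primary part, and identify the free part via the short exact sequence $0\to C_*\xrightarrow{\Delta} C^0_*\to L_*\to 0$.

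One step deserves an extra sentence. In your opening decomposition you let the primary sum range over cyclotomic polynomials only, but the structure theorem over the PID $R$ a priori produces a summand for \emph{every} irreducible $\psi\in R$. To see that non-cyclotomic $\psi$ do not contribute, use the fact (recalled earlier in the paper) that each Poincar\'e polynomial $\W_{\!\sigma}(q)$ is a product of cyclotomic polynomials; hence $L_*$ is annihilated by such a product, and localizing the short exact sequence at $(\psi)$ kills $L_*$ and makes $\Delta$ a quasi-isomorphism, so $H_m(C_*)_{(\psi)}\cong H_m(C^0_*)_{(\psi)}$ is torsion-free. Your fraction-field argument is exactly this localization at the generic point; running the same reasoning at each non-cyclotomic prime closes the gap. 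With that one addition the proof goes through.
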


\begin{corollary}[{\cite[Corollary 5.2]{paolini2017weighted}}]
  Suppose that $\GW$ is an Artin group that admits a $\varphi$-precise matching for every cyclotomic polynomial $\varphi$. Then the homology $H_*(\XW;R)$ has no $\varphi^k$-torsion for $k\geq 2$.
  \label{cor:squarefree-torsion-precise}
\end{corollary}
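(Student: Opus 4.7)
The plan is to deduce the corollary directly from Theorem \ref{thm:homology-artin-groups}. Under the hypothesis, that theorem gives the explicit direct sum decomposition
\[ H_m(\XW; R) \,\cong\, \Bigl( \bigoplus_\varphi \bigl( R/(\varphi) \bigr)^{\oplus \rk \delta_{m+1}^{\M_\varphi}} \Bigr) \oplus H_m(C^0_*), \]
so it suffices to check that no summand on the right-hand side contains $\varphi^k$-torsion for any $k \geq 2$ and any cyclotomic $\varphi$.

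First I would handle the torsion summands. Each direct summand of the form $R/(\varphi)$ is, by definition, annihilated by $\varphi$; since $\varphi$ is irreducible in the PID $R = \Q[q^{\pm 1}]$, the $\varphi$-primary part of $R/(\varphi)$ is $R/(\varphi)$ itself and it clearly has no element annihilated by $\varphi^k$ but not by $\varphi$. Moreover, for distinct cyclotomic polynomials $\varphi \neq \varphi'$ the ideals $(\varphi)$ and $(\varphi')$ are coprime, so a summand $R/(\varphi')$ contributes nothing to the $\varphi$-primary component. Thus the entire torsion block contributes only $\varphi$-torsion, never $\varphi^k$-torsion with $k\geq 2$.

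Next I would dispose of the free-part contribution $H_m(C^0_*)$. The complex $C^0_*$ is a complex of free $R$-modules whose boundary operator $\partial^0$ has matrix entries in $\Z \subseteq R$ (the incidence numbers $[\sigma:\tau]$), and it is obtained by extension of scalars from the analogous complex $C^0_*(\Q)$ over $\Q$. Hence $H_m(C^0_*) \cong H_m(C^0_*(\Q)) \otimes_\Q R$, which is a free $R$-module and therefore torsion-free.

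Combining the two observations, the only torsion appearing in $H_*(\XW; R)$ is of the form $R/(\varphi)$, which establishes the square-freeness claim. The argument is entirely formal once Theorem \ref{thm:homology-artin-groups} is in hand; there is no real obstacle, the work having already been absorbed into the construction of the precise matchings and the proof of that theorem.
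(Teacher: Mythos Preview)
Your proposal is correct and follows the natural route: the corollary is an immediate consequence of the decomposition in Theorem~\ref{thm:homology-artin-groups}, together with the observation (already stated there) that $H_*(C^0_*)$ is the free part. The paper itself does not give a separate proof of this corollary here, simply citing it from \cite{paolini2017weighted}; your argument spells out exactly why the decomposition forces square-free torsion, and there is nothing to add.
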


The formula of Theorem \ref{thm:homology-artin-groups} simplifies further when $\GW$ is of finite type (the free part disappears) or of affine type (the free part only appears in dimension $|S|-1$ and has rank $1$).

\section{The main theorem}
\label{sec:main-theorem}

As mentioned in the introduction, the main result of this paper is the following.

\begin{theorem}
  Every Artin group of finite or affine type admits a $\varphi$-precise matching for each cyclotomic polynomial $\varphi$.
  \label{thm:main-theorem}
\end{theorem}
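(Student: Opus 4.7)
The plan is to prove the theorem case by case via the classification of finite and affine Coxeter groups, after first reducing to the irreducible case. A reducible Coxeter system $(\W, S)$ decomposes as a product $(\W_1, S_1) \times (\W_2, S_2)$, which makes $K_\W$ the join $K_{\W_1} * K_{\W_2}$; since each parabolic subgroup factors accordingly, the weight $v_\varphi$ is additive along the join. I would therefore show that the join of a $\varphi$-precise matching on the $G$ for $\W_1$ with any matching on $\W_2$ (together with its $\varphi$-preserving symmetric partner) yields a $\varphi$-precise matching for $\W$: edges in $\M$ pair two simplices that agree on one factor and lie on a matched edge of the other, acyclicity follows because alternating cycles project to alternating cycles on the factor, and preciseness is immediate from additivity of $v_\varphi$. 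This is the goal of Section \ref{sec:reduction-to-irreducible} and leaves us with the irreducible finite and affine diagrams.

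For the irreducible families already handled in \cite{paolini2017weighted}, namely $A_n$, $B_n$, $\smash{\tilde A_n}$ and $\smash{\tilde C_n}$, I would simply invoke the matchings constructed there. The rank-two family $I_2(m)$ is small enough that the matching can be written down by hand and checked directly against the Poincaré polynomial factorization. For the finite and affine exceptional types ($H_3$, $H_4$, $F_4$, $E_6$, $E_7$, $E_8$, $\smash{\tilde F_4}$, $\smash{\tilde G_2}$, $\smash{\tilde E_6}$, $\smash{\tilde E_7}$, $\smash{\tilde E_8}$), the complex $K_\W$ is finite and only finitely many cyclotomic polynomials contribute, so the construction reduces to a finite combinatorial search; I would rely on the software library \cite{precise-matchings} to produce a candidate and then verify acyclicity, the weight condition, and preciseness.

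The substantive combinatorial work lies in the three remaining infinite families: $D_n$, $\smash{\tilde B_n}$ and $\smash{\tilde D_n}$. For each, I would first read off from Section \ref{sec:weights} the multiplicity $v_\varphi(\sigma)$ as a function of the parabolic type of $\sigma$ and of the index $d$ with $\varphi = \varphi_d$. Then I would design the matching recursively on $n$ by distinguishing a ``pivot'' vertex of the Coxeter diagram and toggling it subject to a tie-breaking order on the remaining vertices chosen so that the weight along the toggle is invariant. Well-definedness and acyclicity would be established by a lexicographic potential on a fixed vertex order, the $\varphi$-weight condition by direct comparison of the Poincaré polynomials of $\W_\sigma$ and $\W_{\sigma \xor \{\text{pivot}\}}$, and preciseness by showing that any alternating path between two critical simplices drops the weight by at most, hence exactly, one. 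The main obstacle will be preciseness in the $\smash{\tilde D_n}$ case: the high symmetry of the affine $D$ diagram forces several pivots to be chosen depending on the congruence class of $d$ modulo small integers (mirroring the arithmetic of the exponents of $D_n$ and $B_n$), and one has to verify that these local rules glue into a single globally acyclic matching that remains $\varphi$-weighted for every cyclotomic $\varphi$ simultaneously. Once the matchings are in place, Corollary \ref{cor:squarefree-torsion-precise} yields the square-freeness statement of Corollary \ref{cor:main-corollary} for free.
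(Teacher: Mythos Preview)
Your overall strategy matches the paper exactly: reduce to irreducible Coxeter systems via a product lemma on the join $K_{\W_1}*K_{\W_2}$, cite \cite{paolini2017weighted} for $A_n$, $B_n$, $\smash{\tilde A_n}$, $\smash{\tilde C_n}$, handle $I_2(m)$ by hand, dispatch the exceptional types by computer, and construct explicit recursive matchings for $D_n$, $\smash{\tilde B_n}$, $\smash{\tilde D_n}$.

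There is, however, a gap in how you plan to verify preciseness. Your criterion ``any alternating path between two critical simplices drops the weight by at most, hence exactly, one'' does not work: the ``hence exactly one'' does not follow, and the paper does not argue this way. What the paper actually does (Lemmas \ref{lemma:matching-An}, \ref{lemma:matching-Dn}, \ref{lemma:matching-tBn}) is show that the quantity $|\sigma|-v_\varphi(\sigma)$ is \emph{constant} across all critical simplices; then for any two critical cells in adjacent dimensions one automatically has $v_\varphi(\sigma)=v_\varphi(\tau)+1$, and preciseness is free. This is the key bookkeeping device, and it is why the tables of critical cells record $|\sigma|-v_\varphi(\sigma)$ rather than the two numbers separately.

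The more serious issue is that this constant-value argument \emph{fails} for $\smash{\tilde D_n}$ with $d$ odd (Table \ref{table:tDn-critical-odd}): there are critical simplices with different values of $|\sigma|-v_\varphi(\sigma)$. The paper then has to compute Morse incidence numbers explicitly and show that the two alternating paths between the offending pair of critical cells cancel. Your proposal does not anticipate this; you would discover mid-proof that the generic preciseness check breaks, and you would need a separate incidence-number computation. Also, in the product lemma, preciseness is not ``immediate from additivity of $v_\varphi$'': one must analyze how alternating paths in $K_{\W_1}*K_{\W_2}$ project to each factor, ruling out two consecutive upward edges in the $K_{\W_1}$-projection, before the incidence numbers factor correctly.
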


By Corollary \ref{cor:squarefree-torsion-precise}, this has the following immediate consequence.

\begin{provedcorollary}
  Let $\GW$ be an Artin group of finite or affine type.
  Then the local homology $H_*(\XW; R)$ has no $\varphi^k$-torsion for $k\geq 2$.
  \label{cor:main-corollary}
\end{provedcorollary}

For Artin groups of finite type, the local homology $H_*(\XW; R)$ coincides with the (constant) homology $H_*(\FW; \Q)$ of the Milnor fiber of the associated hyperplane arrangement \cite{callegaro2005cohomology}.
The $q$-multiplication on the homology of $\XW$ corresponds to the action of the monodromy operator on the homology of $\FW$.
The monodromy operator has a finite order $N$, thus the polynomial $q^N - 1$ must annihilate the homology. Therefore there can only be square-free torsion.

The fact that the same conclusion holds for Artin groups of affine type is surprising, and might be due to some deeper geometric reasons which we still do not know.

The proof of Theorem \ref{thm:main-theorem} is split throughout the rest of this paper.
In Section \ref{sec:reduction-to-irreducible} we show that it is enough to construct precise matchings in the irreducible finite and affine cases.
Case $A_n$ was done in \cite{paolini2017weighted}. However, we study it again in Section \ref{sec:An} as we need it for $D_n$, $\smash{\tilde B_n}$ and $\smash{\tilde D_n}$.
Cases $B_n$, $\smash{\tilde A_n}$ and $\smash{\tilde C_n}$ were done as well in \cite{paolini2017weighted}, so we do not treat them again here.
Case $D_n$ is considered in Section \ref{sec:Dn}, case $\tilde B_n$ in Section \ref{sec:tBn}, case $\smash{\tilde D_n}$ in Section \ref{sec:tDn}, and case $I_2(m)$ in Section \ref{sec:I2}.
In all remaining exceptional cases we construct precise matchings via a computer program, as discussed in Section \ref{sec:exceptional-cases}.

We provide a software library to construct precise matchings for any given finite or affine Artin group, following \cite{paolini2017weighted} and the present paper. 
Source code and instructions can be found online \cite{precise-matchings}.
This library can be used to check preciseness and compute the homology.

\begin{remark}
  Not all Artin groups admit precise matchings for every cyclotomic polynomial.
  For example, consider the Coxeter system $(\W, S)$ defined by the following Coxeter matrix:
  \[
  \left(
  \begin{matrix}
    1 & 3 & 3 & 2 & \infty & 4 \\
    3 & 1 & 3 & 4 & 2 & \infty \\
    3 & 3 & 1 & \infty & 4 & 2 \\
    2 & 4 & \infty & 1 & \infty & \infty \\
    \infty & 2 & 4 & \infty & 1 & \infty \\
    4 & \infty & 2 & \infty & \infty & 1
  \end{matrix}
  \right).
  \]
  The simplicial complex $K_\W$ consists of:
  three $2$-simplices ($\{1,2,4\}, \{2,3,5\},\allowbreak \{1,3,6\}$), all having $\varphi_2$-weight equal to $3$;
  six $1$-simplices with $\varphi_2$-weight equal to $2$ ($\{1,4\},\allowbreak \{2,4\},\allowbreak \{2, 5\}, \{3, 5\}, \{1, 6\}, \{3, 6\}$), and three $1$-simplices with $\varphi_2$-weight equal to 1 ($\{1,2\}, \{2,3\}, \{1,3\}$);
  six $0$-simplices ($\{1\}, \{2\}, \{3\}, \{4\}, \{5\}, \{6\}$), all having $\varphi_2$-weight equal to $1$;
  one empty simplex, with $\varphi_2$-weight equal to 0.
  A $\varphi_2$-weighted matching can only contain edges between simplices of weight $1$.
  Since the three 1-simplices of weight 1 form a cycle, at least one of them (say $\{1,2\}$) is critical.
  Then the incidence number between $\{1,2,4\}$ and $\{1,2\}$ is non-zero, and their $\varphi_2$-weights differ by $2$.
  Therefore the matching cannot be $\varphi_2$-precise.
\end{remark}

We introduce here a few notations that will be used later.
Given a simplex $\sigma \in K_\W$, denote by $\Gamma(\sigma)$ the subgraph of $\Gamma$ induced by $\sigma$.
We will sometimes speak about the connected components of $\Gamma(\sigma)$, which will be denoted by $\Gamma_i(\sigma)$ for some index $i$.
Also, given a vertex $v\in S$, define
\[ \sigma \xor v =
  \begin{cases}
    \sigma \cup \{v\} & \text{if } v \not \in \sigma, \\
    \sigma \setminus \{v\} & \text{if } v \in \sigma.
  \end{cases}
\]

\section{Reduction to the irreducible cases}
\label{sec:reduction-to-irreducible}

Let $(\W_1, S_1)$ and $(\W_2, S_2)$ be Coxeter system, and consider the product Coxeter system $(\W_1\times \W_2, S_1 \sqcup S_2)$.
Suppose that the Artin groups $\GWn{1}$ and $\GWn{2}$ admit $\varphi$-precise matchings $\M_1$ and $\M_2$, respectively.
In the following lemma we construct a $\varphi$-precise matching for $\GWn{1} \times \GWn{2} = \G{W_1 \times W_2}$.

\begin{lemma}
  Fix a cyclotomic polynomial $\varphi$.
  Let $\M_1$ and $\M_2$ be $\varphi$-precise matchings on $\GWn{1}$ and $\GWn{2}$, respectively.
  Then $\G{W_1 \times W_2}$ also admits a $\varphi$-precise matching.
\end{lemma}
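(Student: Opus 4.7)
The plan is to combine $\M_1$ and $\M_2$ via the standard product matching construction. Since the Coxeter graph of $W_1 \times W_2$ is the disjoint union $\Gamma_1 \sqcup \Gamma_2$, every simplex of $K_{W_1 \times W_2}$ decomposes uniquely as $\sigma = \sigma_1 \sqcup \sigma_2$ with $\sigma_i \in K_{W_i}$, and the parabolic factorization $\W_\sigma(q) = \W_{1,\sigma_1}(q) \cdot \W_{2,\sigma_2}(q)$ yields the additivity $v_\varphi(\sigma) = v_\varphi(\sigma_1) + v_\varphi(\sigma_2)$.

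I would define the product matching $\M$ on the incidence graph of $K_{W_1 \times W_2}$ as follows: given $\sigma = \sigma_1 \sqcup \sigma_2$, if $\sigma_1$ is non-critical for $\M_1$ with match $\tau_1$, then match $\sigma$ with $\tau_1 \sqcup \sigma_2$; otherwise, if $\sigma_2$ is non-critical for $\M_2$ with match $\tau_2$, match $\sigma$ with $\sigma_1 \sqcup \tau_2$; otherwise $\sigma$ is critical. Thus the critical simplices of $\M$ are precisely the products $\sigma_1 \sqcup \sigma_2$ with both $\sigma_i$ critical for $\M_i$. The $\varphi$-weighted property of $\M$ is then immediate from the additivity of $v_\varphi$, since each matched edge of $\M$ alters only one coordinate via a $\varphi$-weighted edge of $\M_1$ or $\M_2$.

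Acyclicity of $\M$ is a standard property of product matchings on product posets (see, e.g., the treatment in Kozlov's book): any nontrivial alternating cycle in $\M$ would project (after suppressing steps that fix the first coordinate) to an alternating cycle in $\M_1$, and in the portions where the first coordinate remains critical, to an alternating cycle in $\M_2$. Since both $\M_1$ and $\M_2$ are acyclic by hypothesis, no such cycle can exist.

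For preciseness, the critical claim is that the Morse boundary on the product factors as a tensor product of the Morse boundaries of $\M_1$ and $\M_2$: for any pair of critical simplices $\sigma = \sigma_1 \sqcup \sigma_2$ and $\tau = \tau_1 \sqcup \tau_2$ with $|\tau| = |\sigma|-1$, the coefficient $[\sigma:\tau]^\M$ vanishes unless $\sigma_i = \tau_i$ for some $i \in \{1,2\}$, and in each remaining case equals (up to sign) the corresponding $[\sigma_j:\tau_j]^{\M_j}$. Granting this, preciseness of the relevant $\M_j$ applied to the non-trivial factor yields $v_\varphi(\sigma) - v_\varphi(\tau) = 1$ via the additivity of $v_\varphi$. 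The main obstacle is this combinatorial claim, which I would establish by induction on the length of an alternating path from $\sigma$ to $\tau$: since $\M$ permits a second-coordinate move only from a simplex whose first coordinate is critical for $\M_1$ (and symmetrically), once the first coordinate deviates from a critical value, the path must resolve that deviation through first-coordinate moves before any second-coordinate motion can occur, and the tight dimension constraint $|\tau|=|\sigma|-1$ prevents the two coordinates from contributing nontrivial excursions simultaneously.
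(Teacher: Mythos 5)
Your proposal is correct and follows essentially the same route as the paper: the same lexicographic product matching (with the roles of the two factors swapped relative to the paper's convention), acyclicity via projecting alternating cycles to the prioritized factor, weightedness from additivity of $v_\varphi$, and preciseness by showing that any alternating path between critical simplices must be trivial in one coordinate — precisely because matched moves in the non-prioritized factor are only permitted when the other coordinate is critical, which together with the dimension constraint forces $[\sigma:\tau]^{\M}$ to reduce to a single factor's incidence number. The paper carries out your "main obstacle" step by a direct case analysis on the two projections rather than induction on path length, but the underlying mechanism is identical.
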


\begin{proof}
  
  First notice that the simplicial complex $K_{\W_1\times \W_2}$ consists in the simplicial join $K_{\W_1} * K_{\W_2}$ of the two simplicial complexes $K_{\W_1}$ and $K_{\W_2}$:
  \[ K_{\W_1\times \W_2} = K_{\W_1} * K_{\W_2} = \{ \sigma_1 \sqcup \sigma_2 \mid \sigma_1 \in K_{\W_1}, \; \sigma_2 \in K_{\W_2} \}. \]
  The weights behave well with respect to this decomposition:
  \[ v_\varphi(\sigma_1 \sqcup \sigma_2) = v_\varphi(\sigma_1) + v_\varphi(\sigma_2). \]
  
  Construct a matching $\M$ on $K_{\W_1\times \W_2}$ as follows.
  \begin{IEEEeqnarray*}{rCl}
    \M &=& \Big\{ \sigma_1 \sqcup \sigma_2 \to \sigma_1 \sqcup \tau_2 \;\Big|\; (\sigma_2 \to \tau_2) \in \M_2 \Big\} \; \cup \\
    && \Big\{ \sigma_1 \sqcup \sigma_2 \to \tau_1 \sqcup \sigma_2 \;\Big|\; (\sigma_1 \to \tau_1) \in \M_2 \text{ and } \sigma_2 \text{ is critical in } K_{\W_2} \Big\}.
  \end{IEEEeqnarray*}
  The critical simplices $\sigma_1\sqcup \sigma_2$ of $K_{\W_1\times \W_2}$ are those for which $\sigma_1$ is critical in $K_{\W_1}$ and $\sigma_2$ is critical in $K_{\W_2}$.
  
  Any alternating path in $K_{\W_1\times \W_2}$ projects onto an alternating path in $K_{\W_2}$ via the map $\sigma_1 \sqcup \sigma_2 \mapsto \sigma_2$ (provided that multiple consecutive occurrences of the same simplex are replaced by a single occurrence).
  This is because an edge of the form $\sigma_1 \sqcup \sigma_2 \to \sigma_1 \sqcup \tau_2$ is in $\M$ if and only if $\sigma_2 \to \tau_2$ is in $\M_2$.
  The same statement is not true for $K_{\W_1}$, but we still have a weaker property which will be useful later:
  the projection of an alternating path to $K_{\W_1}$ cannot have two consecutive edges both traversed ``upwards'' (i.e.\ increasing dimension).
  This is because if an edge of the form $\sigma_1 \sqcup \sigma_2 \to \tau_1 \sqcup \sigma_2$ is in $\M$, then $\sigma_1 \to \tau_1$ is in $\M_1$.
  
  Let us prove that $\M$ is acyclic.
  Consider an alternating cycle $c$ in $K_{\W_1\times \W_2}$.
  Its projection onto $K_{\W_2}$ gives an alternating cycle, which must be trivial because $\M_2$ is acyclic.
  Therefore $c$ takes the following form, for some fixed simplex $\sigma_2 \in K_{\W_2}$:
  \[ \sigma_{1,0} \sqcup \sigma_2 \,\rhd\, \tau_{1,1} \sqcup \sigma_2 \,\lhd\, \sigma_{1,1} \sqcup \sigma_2 \,\rhd\,
  \dots
  \,\rhd\, \tau_{1,{m}} \sqcup \sigma_2 \,\lhd\, \sigma_{1,0} \sqcup \sigma_2. \]
  If $\sigma_2$ is critical in $K_{\W_2}$, then also the projection of $c$ onto $K_{\W_1}$ is an alternating cycle. By acyclicity of $\M_1$ such a projection must be the trivial cycle, so also $c$ is trivial.
  On the other hand, if $\sigma_2$ is not critical, then none of the edges $\sigma_{1,i} \sqcup \sigma_2 \to \tau_{1,i} \sqcup \sigma_2$ is in $\M$, thus $c$ must be trivial as well.
  
  By construction, and by additivity of the weight function $v_\varphi$, the matching $\M$ is $\varphi$-weighted.
  
  Finally, suppose that $[\sigma_1 \sqcup \sigma_2 : \tau_1 \sqcup \tau_2]^\M \neq 0$, where $\sigma_1 \sqcup \sigma_2$ and $\tau_1 \sqcup \tau_2$ are critical simplices of $K_{\W_1\times \W_2}$ with $\dim(\sigma_1 \sqcup \sigma_2) = \dim(\tau_1 \sqcup \tau_2) + 1$.
  Let $\mathcal{P} \neq \varnothing$ be the set of alternating paths from $\sigma_1 \sqcup \sigma_2$ to $\tau_1 \sqcup \tau_2$. Given any path $p \in \mathcal{P}$, its projection onto $K_{\W_2}$ is an alternating path from $\sigma_2$ to $\tau_2$.
  \begin{enumerate}
    \item Suppose $\sigma_2 = \tau_2$.
    Then the projected paths are trivial in $K_{\W_2}$, so $\mathcal{P}$ is in bijection with the set of alternating paths from $\sigma_1$ to $\tau_1$ in $K_{\W_1}$.
    Therefore $[\sigma_1 : \tau_1]^{\M_1} = \pm [\sigma_1 \sqcup \sigma_2 : \tau_1 \sqcup \tau_2]^\M \neq 0$. Since $\M_1$ is $\varphi$-precise, we conclude that $v_\varphi(\sigma_1) = v_\varphi(\tau_1) + 1$ and so that $v_\varphi(\sigma_1 \sqcup \sigma_2) = v_\varphi(\tau_1 \sqcup \sigma_2) + 1$.
    \label{li:first-case}
    
    \item Suppose $\sigma_2 \neq \tau_2$.
    The projection of any $p\in \mathcal{P}$ onto $K_{\W_2}$ is a non-trivial alternating path, and $\mathcal{P}$ is non-empty, so $\dim(\sigma_2) = \dim(\tau_2) + 1$.
    Then $\dim(\sigma_1) = \dim(\tau_1)$.
    For any alternating path $p \in \mathcal{P}$, consider now its projection $q$ onto $K_{\W_1}$. We want to prove that $q$ is a trivial path (thus in particular $\sigma_1 = \tau_1$).
    Suppose by contradiction that $q$ is non-trivial. Then, since $\sigma_1$ and $\tau_1$ have the same dimension, one of the following three possibilities must occur.
    \begin{itemize}
      \item The path $q$ begins with an upward edge $\sigma_1 \lhd \rho$.
      Then $(\rho \to \sigma_1) \in \M_1$, which is not possible because $\sigma_1$ is critical.
      \item The path $q$ ends with an upward edge $\rho \lhd \tau_1$.
      Then $(\tau_1 \to \rho) \in \M_1$, which is not possible because $\tau_1$ is critical.
      \item The path $q$ begins and ends with a downward edge, so it must have two consecutive upward edges somewhere in the middle.
      This is also not possible by previous considerations.
    \end{itemize}
    We proved that the projection on $K_{\W_1}$ of any alternating path $p\in\mathcal{P}$ is trivial, and thus in particular $\sigma_1 = \tau_1$ (because $\mathcal{P}$ is non-empty).
    Then $\mathcal{P}$ is in bijection with the set of alternating paths from $\sigma_2$ to $\tau_2$ in $K_{\W_2}$.
    We conclude as in case \eqref{li:first-case}. \qedhere
  \end{enumerate}
\end{proof}

In view of this lemma, from now on we only consider irreducible Coxeter systems.

\section{Weight of irreducible components}
\label{sec:weights}

In order to compute the weight $v_\varphi(\sigma)$ of a simplex $\sigma \in K_\W$, one needs to know the Poincaré polynomial of the parabolic subgroup $\W_{\!\sigma}$ of $\W$.
Let $\Gamma_1(\sigma), \dots, \Gamma_m(\sigma)$ be the connected components of the subgraph $\Gamma(\sigma)\subseteq \Gamma$ induced by $\sigma$.
Then the Poincaré polynomial of $\W_{\!\sigma}$ splits as a product of the Poincaré polynomials of irreducible components of finite type:
\[ \W_{\!\sigma}(q) = \prod_{i=1}^m \W_{\Gamma_i(\sigma)}(q), \;\; \text{and therefore} \;\; v_\varphi(\sigma) = \sum_{i=1}^m v_\varphi(\Gamma_i(\sigma)). \]
In this section we derive formulas for the $\varphi$-weight of an irreducible component of type $A_n$, $B_n$ and $D_n$ (see Figure \ref{fig:ABD}).

\begin{figure}[htbp]
  \begin{tikzpicture}
\begin{scope}[every node/.style={circle,thick,draw,inner sep=2.5}, every label/.style={rectangle,draw=none}]
  \node (0) at (0.0,0) [label={above,minimum height=13}:$ $] {};
  \node (1) at (1.4,0) [label={above,minimum height=13}:$ $] {};
  \node (2) at (2.8,0) [label={above,minimum height=13}:$ $] {};
  \node (3) at (4.2,0) [label={above,minimum height=13}:$ $] {};
  \node (4) at (5.6,0) [label={above,minimum height=13}:$ $] {};
  \node (5) at (7.0,0) [label={above,minimum height=13}:$ $] {};
\end{scope}
\begin{scope}[every edge/.style={draw=black!60,line width=1.2}]
  \path (0) edge node {} (1);
  \path (1) edge node {} (2);
  \path (2) edge node {} (3);
  \path (3) edge node [fill=white, rectangle, inner sep=3.0, minimum height = 0.5cm] {$\ldots$} (4);
  \path (4) edge node {} (5);
\end{scope}
\begin{scope}[every edge/.style={draw=black, line width=3}]
\end{scope}
\begin{scope}[every node/.style={draw,inner sep=11.5,yshift=-4}, every label/.style={rectangle,draw=none,inner sep=6.0}, every fit/.append style=text badly centered]
\end{scope}
\end{tikzpicture}
   
  \vskip0.3cm
  
  \begin{tikzpicture}
\begin{scope}[every node/.style={circle,thick,draw,inner sep=2.5}, every label/.style={rectangle,draw=none}]
  \node (0) at (0.0,0) [label={above,minimum height=13}:$ $] {};
  \node (1) at (1.4,0) [label={above,minimum height=13}:$ $] {};
  \node (2) at (2.8,0) [label={above,minimum height=13}:$ $] {};
  \node (3) at (4.2,0) [label={above,minimum height=13}:$ $] {};
  \node (4) at (5.6,0) [label={above,minimum height=13}:$ $] {};
  \node (5) at (7.0,0) [label={above,minimum height=13}:$ $] {};
\end{scope}
\begin{scope}[every edge/.style={draw=black!60,line width=1.2}]
  \path (0) edge node {} node[inner sep=3, above] {\bf\small 4} (1);
  \path (1) edge node {} (2);
  \path (2) edge node {} (3);
  \path (3) edge node [fill=white, rectangle, inner sep=3.0, minimum height = 0.5cm] {$\ldots$} (4);
  \path (4) edge node {} (5);
\end{scope}
\begin{scope}[every edge/.style={draw=black, line width=3}]
\end{scope}
\begin{scope}[every node/.style={draw,inner sep=11.5,yshift=-4}, every label/.style={rectangle,draw=none,inner sep=6.0}, every fit/.append style=text badly centered]
\end{scope}
\end{tikzpicture}
   
  \vskip0.3cm
  
  \begin{tikzpicture}
\begin{scope}[every node/.style={circle,thick,draw,inner sep=2.5}, every label/.style={rectangle,draw=none}]
  \node (0) at (135.0:1.4) [label={above,minimum height=13}:$ $] {};
  \node (1) at (225.0:1.4) [label={above,minimum height=13}:$ $] {};
  \node (2) at (0.0,0) [label={above,minimum height=13}:$ $] {};
  \node (3) at (1.4,0) [label={above,minimum height=13}:$ $] {};
  \node (4) at (2.8,0) [label={above,minimum height=13}:$ $] {};
  \node (5) at (4.2,0) [label={above,minimum height=13}:$ $] {};
  \node (6) at (5.6,0) [label={above,minimum height=13}:$ $] {};
\end{scope}
\begin{scope}[every edge/.style={draw=black!60,line width=1.2}]
  \path (0) edge node {} (2);
  \path (1) edge node {} (2);
  \path (2) edge node {} (3);
  \path (3) edge node {} (4);
  \path (4) edge node [fill=white, rectangle, inner sep=3.0, minimum height = 0.5cm] {$\ldots$} (5);
  \path (5) edge node {} (6);
\end{scope}
\begin{scope}[every edge/.style={draw=black, line width=3}]
\end{scope}
\begin{scope}[every node/.style={draw,inner sep=11.5,yshift=-4}, every label/.style={rectangle,draw=none,inner sep=6.0}, every fit/.append style=text badly centered]
\end{scope}
\end{tikzpicture}
   \caption{Coxeter graphs of type $A_n$, $B_n$ and $D_n$. All these graphs have $n$ vertices.}
  \label{fig:ABD}
\end{figure}
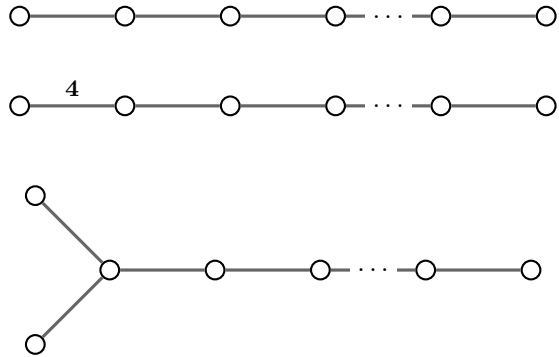

\subsection*{Components of type $A_n$}
The exponents of a Coxeter group $\W_{\! A_n}$ of type $A_n$ are $1,2,\dots, n$. Then its Poincaré polynomial is $\W_{\! A_n}(q) = [n+1]_q!$. If $\varphi_d$ is the $d$-th cyclotomic polynomial (for $d\geq 2$), the $\varphi_d$-weight is then
\[ \omega_{\varphi_d}(A_n) = \textstyle\left\lfloor \frac{n+1}{d} \right\rfloor. \]

\subsection*{Components of type $B_n$}
In this case the exponents are $1,3,\dots,2n-3,2n-1$, and the Poincaré polynomial is $\W_{\! B_n}(q) = [2n]_q!!$. The $\varphi_d$-weight (for $d \geq 2$) is given by
\[ \omega_{\varphi_d}(B_n) =
  \begin{cases}
    \left\lfloor \frac{n}{d} \right\rfloor & \text{if $d$ is odd}; \\[0.2cm]
    \left\lfloor \frac{n}{d/2} \right\rfloor & \text{if $d$ is even}.
  \end{cases} \]

\subsection*{Components of type $D_n$}
Here the exponents are $1,3,\dots,2n-3,n-1$, and the Poincaré polynomial is $\W_{\! D_n}(q) = [2n-2]_q!! \cdot [n]_q$. The $\varphi_d$-weight (for $d \geq 2$) is given by
\[ \omega_{\varphi_d}(D_n) =
  \begin{cases}
    \left\lfloor \frac{n}{d} \right\rfloor & \text{if $d$ is odd}; \\[0.2cm]
    \left\lfloor \frac{n-1}{d/2} \right\rfloor & \text{if $d$ is even and $d\nmid n$}; \\[0.2cm]
    \frac{n}{d/2} & \text{if $d$ is even and $d \mid n$}.
  \end{cases} \]

\section{Case \texorpdfstring{$A_n$}{An} revisited}
\label{sec:An}

The construction of a precise matching for the case $A_n$ was thoroughly discussed in \cite{paolini2017weighted}.
However, in order to better describe precise matchings for the cases $D_n$, $\tilde B_n$, and $\tilde D_n$, we need a slightly more general construction.

Throughout this section, let $(\W_{\! A_n}, S)$ be a Coxeter system of type $A_n$ with generating set $S=\{1, 2, \dots, n\}$, and let $K_n^A = K_{\W_{\! A_n}}$.
See Figure \ref{fig:An} for a drawing of the corresponding Coxeter graph.

\begin{figure}[htbp]
  \begin{tikzpicture}
\begin{scope}[every node/.style={circle,thick,draw,inner sep=2.5}, every label/.style={rectangle,draw=none}]
  \node (1) at (0.0,0) [label={above,minimum height=13}:$1$] {};
  \node (2) at (1.4,0) [label={above,minimum height=13}:$2$] {};
  \node (3) at (2.8,0) [label={above,minimum height=13}:$3$] {};
  \node (4) at (4.2,0) [label={above,minimum height=13}:$4$] {};
  \node (n-1) at (5.6,0) [label={above,minimum height=13}:$n-1$] {};
  \node (n) at (7.0,0) [label={above,minimum height=13}:$n$] {};
\end{scope}
\begin{scope}[every edge/.style={draw=black!60,line width=1.2}]
  \path (1) edge node {} (2);
  \path (2) edge node {} (3);
  \path (3) edge node {} (4);
  \path (4) edge node [fill=white, rectangle, inner sep=3.0, minimum height = 0.5cm] {$\ldots$} (n-1);
  \path (n-1) edge node {} (n);
\end{scope}
\begin{scope}[every edge/.style={draw=black, line width=3}]
\end{scope}
\begin{scope}[every node/.style={draw,inner sep=11.5,yshift=-4}, every label/.style={rectangle,draw=none,inner sep=6.0}, every fit/.append style=text badly centered]
\end{scope}
\end{tikzpicture}
   \caption{A Coxeter graph of type $A_n$.}
  \label{fig:An}
\end{figure}
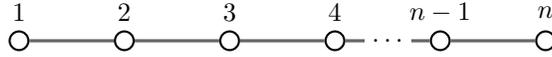

For integers $f,g \geq 0$, define $K_{n,f,g}^A \subseteq K_n$ as follows:
\[ K_{n,f,g}^A = \{ \sigma \in K_n \mid \{1,2,\dots,f\} \subseteq \sigma \text{ and } \{ n-g+1,n-g+2, \dots, n \} \subseteq \sigma \}. \]
In other words, $\smash{K_{n,f,g}^A}$ is the subset of $K_n^A$ consisting of the simplices which contain the first $f$ vertices and the last $g$ vertices.
In general, $\smash{K_{n,f,g}^A}$ is not a subcomplex of $K_n^A$.
For any $d \geq 2$, we are going to recursively construct a $\varphi_d$-weighted acyclic matching on $K_{n,f,g}^A$.
This matching coincides with the one of \cite[Section 5.1]{paolini2017weighted} when $g=0$ and $f\leq d-1$.
See also Table \ref{table:An-matching-example} for an example.

\begin{breakmatching}[$\varphi_d$-matching on $K_{n,f,g}^A$]
  \begin{enumerate}[(a)]
    \item If $f+g \geq n$ then $K_{n,f,g}^A$ has size at most $1$, and the matching is empty.
    In the subsequent cases, assume $f+g < n$.
    
    \item If $f \geq d$, then $K_{n,f,g}^A \cong K_{n-d,\,f-d,\,g}^A$ via removal of the first $d$ vertices.
    Define the matching recursively, as in $K_{n-d,\,f-d,\,g}^A$.
    In the subsequent cases, assume $f \leq d-1$.
    
    \item Case $n \geq d+g$.
    \begin{enumerate}
      \item[(c1)] If $\{1,\dots, d-1\} \subseteq \sigma$, then match $\sigma$ with $\sigma \xor d$ (here the vertex $d$ exists and can be removed, because $n\geq d+g$).
      Notice that for $f=d-1$ this is always the case, thus in the subsequent cases we can assume $f\leq d-2$.
      
      \item[(c2)] Otherwise, if $f+1 \in \sigma$ then match $\sigma$ with $\sigma \setminus\{f+1\}$.
      
      \item[(c3)] Otherwise, if $\{f+2, \dots, d-1\} \nsubseteq \sigma$ then match $\sigma$ with $\sigma \cup \{f+1\}$.
      
      \item[(c4)] We are left with the simplices $\sigma$ such that $\{1,\dots,f,f+2,\dots,d-1\}\subseteq \sigma$ and $f+1\not\in\sigma$. If we ignore the vertices $1,\dots,f+1$ we are left with the simplices on the vertex set $\{f+2,\dots,n\}$ which contain $f+2,\dots,d-1$; relabeling the vertices, these are the same as the simplices on the vertex set $\{1,\dots,n-f-1\}$ which contain $1,\dots,d-2-f$.
 Then construct the matching recursively as in $K_{n-f-1,\, d-2-f, \, g}^A$.
    \end{enumerate}
    
    \item Case $n < d+g$ (in particular, $f \leq d-2$).
    \begin{enumerate}
      \item[(d1)] If $n \equiv -1,0,1,\dots,f \pmod d$ and $\sigma$ is either
      \[ \{1,\dots n\} \text{ or } \{1,\dots,f,f+2,\dots,n\}, \]
      then $\sigma$ is critical.
      
      \item[(d2)] Otherwise, match $\sigma$ with $\sigma \xor f+1$.
    \end{enumerate}
  \end{enumerate}
  \label{matching:An}
\end{breakmatching}

\begin{table}[htbp]

{\renewcommand{\arraystretch}{1.4}

\begin{tabular}{rcl@{\hspace{15pt}}|c}

\multicolumn{3}{c|}{Simplices} & $v_\varphi(\sigma)$ \\

\hline

\begin{tikzpicture}
\begin{scope}[every node/.style={circle,thick,draw,inner sep=2.5}, every label/.style={rectangle,draw=none}]
  \node (1) at (0.0,0) [label={above,minimum height=13}:$ $,fill=black!50] {};
  \node (2) at (0.7,0) [label={above,minimum height=13}:$ $,fill=black!50] {};
  \node (3) at (1.4,0) [label={above,minimum height=13}:$ $,fill=black!50] {};
  \node (4) at (2.1,0) [label={above,minimum height=13}:$ $,fill=black!50] {};
  \node (5) at (2.8,0) [label={above,minimum height=13}:$ $,fill=black!50] {};
  \node (6) at (3.5,0) [label={above,minimum height=13}:$ $,fill=black!50] {};
  \node (7) at (4.2,0) [label={above,minimum height=13}:$ $,fill=black!50] {};
\end{scope}
\begin{scope}[every edge/.style={draw=black!60,line width=1.2}]
\end{scope}
\begin{scope}[every edge/.style={draw=black, line width=3}]
  \path (1) edge node {} (2);
  \path (2) edge node {} (3);
  \path (3) edge node {} (4);
  \path (4) edge node {} (5);
  \path (5) edge node {} (6);
  \path (6) edge node {} (7);
\end{scope}
\begin{scope}[every node/.style={draw,inner sep=11.5,yshift=-4}, every label/.style={rectangle,draw=none,inner sep=6.0}, every fit/.append style=text badly centered]
\end{scope}
\end{tikzpicture}
 & $\longrightarrow$ & 
\begin{tikzpicture}
\begin{scope}[every node/.style={circle,thick,draw,inner sep=2.5}, every label/.style={rectangle,draw=none}]
  \node (1) at (0.0,0) [label={above,minimum height=13}:$ $,fill=black!50] {};
  \node (2) at (0.7,0) [label={above,minimum height=13}:$ $,fill=black!50] {};
  \node (3) at (1.4,0) [label={above,minimum height=13}:$ $] {};
  \node (4) at (2.1,0) [label={above,minimum height=13}:$ $,fill=black!50] {};
  \node (5) at (2.8,0) [label={above,minimum height=13}:$ $,fill=black!50] {};
  \node (6) at (3.5,0) [label={above,minimum height=13}:$ $,fill=black!50] {};
  \node (7) at (4.2,0) [label={above,minimum height=13}:$ $,fill=black!50] {};
\end{scope}
\begin{scope}[every edge/.style={draw=black!60,line width=1.2}]
  \path (2) edge node {} (3);
  \path (3) edge node {} (4);
\end{scope}
\begin{scope}[every edge/.style={draw=black, line width=3}]
  \path (1) edge node {} (2);
  \path (4) edge node {} (5);
  \path (5) edge node {} (6);
  \path (6) edge node {} (7);
\end{scope}
\begin{scope}[every node/.style={draw,inner sep=11.5,yshift=-4}, every label/.style={rectangle,draw=none,inner sep=6.0}, every fit/.append style=text badly centered]
\end{scope}
\end{tikzpicture}
 & 2 \\

\begin{tikzpicture}
\begin{scope}[every node/.style={circle,thick,draw,inner sep=2.5}, every label/.style={rectangle,draw=none}]
  \node (1) at (0.0,0) [label={above,minimum height=13}:$ $,fill=black!50] {};
  \node (2) at (0.7,0) [label={above,minimum height=13}:$ $,fill=black!50] {};
  \node (3) at (1.4,0) [label={above,minimum height=13}:$ $,fill=black!50] {};
  \node (4) at (2.1,0) [label={above,minimum height=13}:$ $] {};
  \node (5) at (2.8,0) [label={above,minimum height=13}:$ $,fill=black!50] {};
  \node (6) at (3.5,0) [label={above,minimum height=13}:$ $,fill=black!50] {};
  \node (7) at (4.2,0) [label={above,minimum height=13}:$ $,fill=black!50] {};
\end{scope}
\begin{scope}[every edge/.style={draw=black!60,line width=1.2}]
  \path (3) edge node {} (4);
  \path (4) edge node {} (5);
\end{scope}
\begin{scope}[every edge/.style={draw=black, line width=3}]
  \path (1) edge node {} (2);
  \path (2) edge node {} (3);
  \path (5) edge node {} (6);
  \path (6) edge node {} (7);
\end{scope}
\begin{scope}[every node/.style={draw,inner sep=11.5,yshift=-4}, every label/.style={rectangle,draw=none,inner sep=6.0}, every fit/.append style=text badly centered]
\end{scope}
\end{tikzpicture}
 & $\longrightarrow$ & 
\begin{tikzpicture}
\begin{scope}[every node/.style={circle,thick,draw,inner sep=2.5}, every label/.style={rectangle,draw=none}]
  \node (1) at (0.0,0) [label={above,minimum height=13}:$ $,fill=black!50] {};
  \node (2) at (0.7,0) [label={above,minimum height=13}:$ $,fill=black!50] {};
  \node (3) at (1.4,0) [label={above,minimum height=13}:$ $] {};
  \node (4) at (2.1,0) [label={above,minimum height=13}:$ $] {};
  \node (5) at (2.8,0) [label={above,minimum height=13}:$ $,fill=black!50] {};
  \node (6) at (3.5,0) [label={above,minimum height=13}:$ $,fill=black!50] {};
  \node (7) at (4.2,0) [label={above,minimum height=13}:$ $,fill=black!50] {};
\end{scope}
\begin{scope}[every edge/.style={draw=black!60,line width=1.2}]
  \path (2) edge node {} (3);
  \path (3) edge node {} (4);
  \path (4) edge node {} (5);
\end{scope}
\begin{scope}[every edge/.style={draw=black, line width=3}]
  \path (1) edge node {} (2);
  \path (5) edge node {} (6);
  \path (6) edge node {} (7);
\end{scope}
\begin{scope}[every node/.style={draw,inner sep=11.5,yshift=-4}, every label/.style={rectangle,draw=none,inner sep=6.0}, every fit/.append style=text badly centered]
\end{scope}
\end{tikzpicture}
 & 2 \\

\begin{tikzpicture}
\begin{scope}[every node/.style={circle,thick,draw,inner sep=2.5}, every label/.style={rectangle,draw=none}]
  \node (1) at (0.0,0) [label={above,minimum height=13}:$ $,fill=black!50] {};
  \node (2) at (0.7,0) [label={above,minimum height=13}:$ $] {};
  \node (3) at (1.4,0) [label={above,minimum height=13}:$ $,fill=black!50] {};
  \node (4) at (2.1,0) [label={above,minimum height=13}:$ $] {};
  \node (5) at (2.8,0) [label={above,minimum height=13}:$ $,fill=black!50] {};
  \node (6) at (3.5,0) [label={above,minimum height=13}:$ $,fill=black!50] {};
  \node (7) at (4.2,0) [label={above,minimum height=13}:$ $,fill=black!50] {};
\end{scope}
\begin{scope}[every edge/.style={draw=black!60,line width=1.2}]
  \path (1) edge node {} (2);
  \path (2) edge node {} (3);
  \path (3) edge node {} (4);
  \path (4) edge node {} (5);
\end{scope}
\begin{scope}[every edge/.style={draw=black, line width=3}]
  \path (5) edge node {} (6);
  \path (6) edge node {} (7);
\end{scope}
\begin{scope}[every node/.style={draw,inner sep=11.5,yshift=-4}, every label/.style={rectangle,draw=none,inner sep=6.0}, every fit/.append style=text badly centered]
\end{scope}
\end{tikzpicture}
 & $\longrightarrow$ & 
\begin{tikzpicture}
\begin{scope}[every node/.style={circle,thick,draw,inner sep=2.5}, every label/.style={rectangle,draw=none}]
  \node (1) at (0.0,0) [label={above,minimum height=13}:$ $,fill=black!50] {};
  \node (2) at (0.7,0) [label={above,minimum height=13}:$ $] {};
  \node (3) at (1.4,0) [label={above,minimum height=13}:$ $] {};
  \node (4) at (2.1,0) [label={above,minimum height=13}:$ $] {};
  \node (5) at (2.8,0) [label={above,minimum height=13}:$ $,fill=black!50] {};
  \node (6) at (3.5,0) [label={above,minimum height=13}:$ $,fill=black!50] {};
  \node (7) at (4.2,0) [label={above,minimum height=13}:$ $,fill=black!50] {};
\end{scope}
\begin{scope}[every edge/.style={draw=black!60,line width=1.2}]
  \path (1) edge node {} (2);
  \path (2) edge node {} (3);
  \path (3) edge node {} (4);
  \path (4) edge node {} (5);
\end{scope}
\begin{scope}[every edge/.style={draw=black, line width=3}]
  \path (5) edge node {} (6);
  \path (6) edge node {} (7);
\end{scope}
\begin{scope}[every node/.style={draw,inner sep=11.5,yshift=-4}, every label/.style={rectangle,draw=none,inner sep=6.0}, every fit/.append style=text badly centered]
\end{scope}
\end{tikzpicture}
 & 1 \\

\begin{tikzpicture}
\begin{scope}[every node/.style={circle,thick,draw,inner sep=2.5}, every label/.style={rectangle,draw=none}]
  \node (1) at (0.0,0) [label={above,minimum height=13}:$ $,fill=black!50] {};
  \node (2) at (0.7,0) [label={above,minimum height=13}:$ $] {};
  \node (3) at (1.4,0) [label={above,minimum height=13}:$ $,fill=black!50] {};
  \node (4) at (2.1,0) [label={above,minimum height=13}:$ $,fill=black!50] {};
  \node (5) at (2.8,0) [label={above,minimum height=13}:$ $,fill=black!50] {};
  \node (6) at (3.5,0) [label={above,minimum height=13}:$ $,fill=black!50] {};
  \node (7) at (4.2,0) [label={above,minimum height=13}:$ $,fill=black!50] {};
\end{scope}
\begin{scope}[every edge/.style={draw=black!60,line width=1.2}]
  \path (1) edge node {} (2);
  \path (2) edge node {} (3);
\end{scope}
\begin{scope}[every edge/.style={draw=black, line width=3}]
  \path (3) edge node {} (4);
  \path (4) edge node {} (5);
  \path (5) edge node {} (6);
  \path (6) edge node {} (7);
\end{scope}
\begin{scope}[every node/.style={draw,inner sep=11.5,yshift=-4}, every label/.style={rectangle,draw=none,inner sep=6.0}, every fit/.append style=text badly centered]
\end{scope}
\end{tikzpicture}
 & & \emph{(critical)} & 2 \\

\begin{tikzpicture}
\begin{scope}[every node/.style={circle,thick,draw,inner sep=2.5}, every label/.style={rectangle,draw=none}]
  \node (1) at (0.0,0) [label={above,minimum height=13}:$ $,fill=black!50] {};
  \node (2) at (0.7,0) [label={above,minimum height=13}:$ $] {};
  \node (3) at (1.4,0) [label={above,minimum height=13}:$ $] {};
  \node (4) at (2.1,0) [label={above,minimum height=13}:$ $,fill=black!50] {};
  \node (5) at (2.8,0) [label={above,minimum height=13}:$ $,fill=black!50] {};
  \node (6) at (3.5,0) [label={above,minimum height=13}:$ $,fill=black!50] {};
  \node (7) at (4.2,0) [label={above,minimum height=13}:$ $,fill=black!50] {};
\end{scope}
\begin{scope}[every edge/.style={draw=black!60,line width=1.2}]
  \path (1) edge node {} (2);
  \path (2) edge node {} (3);
  \path (3) edge node {} (4);
\end{scope}
\begin{scope}[every edge/.style={draw=black, line width=3}]
  \path (4) edge node {} (5);
  \path (5) edge node {} (6);
  \path (6) edge node {} (7);
\end{scope}
\begin{scope}[every node/.style={draw,inner sep=11.5,yshift=-4}, every label/.style={rectangle,draw=none,inner sep=6.0}, every fit/.append style=text badly centered]
\end{scope}
\end{tikzpicture}
 & & \emph{(critical)} & 1

\end{tabular}

}

\vskip0.3cm

   \caption{Matching \ref{matching:An} on $K^A_{7,1,3}$ for $d=3$.}
  \label{table:An-matching-example}
\end{table}

\begin{lemma}
  Matching \ref{matching:An} is acyclic.
  \label{lemma:An-acyclic}
\end{lemma}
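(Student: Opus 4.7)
The plan is to prove acyclicity by induction on $n$, following the recursive structure of the matching. In the base case (a) the matching is empty, hence trivially acyclic. For the inductive step, case (b) is handled immediately: the map $\sigma \mapsto \sigma \setminus \{1,\dots,d\}$ (with remaining vertices relabeled) is a Hasse-diagram isomorphism $K_{n,f,g}^A \cong K_{n-d, f-d, g}^A$ that carries one matching to the other, so acyclicity transfers from the inductive hypothesis.

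The main case is (c), which I would address via a Patchwork-style partition argument. Partition $K_{n,f,g}^A$ into three pieces, linearly ordered $A > B > C$:
\[
A = \{\sigma : \{1,\dots,d-1\} \subseteq \sigma\}, \qquad
B = \{\sigma : \{1,\dots,f,f+2,\dots,d-1\} \subseteq \sigma,\; f+1 \notin \sigma\},
\]
\[
C = \{\sigma : \{f+2,\dots,d-1\} \nsubseteq \sigma\}.
\]
Using the fact that every $\sigma \in K_{n,f,g}^A$ contains $\{1,\dots,f\}$ and $\{n-g+1,\dots,n\}$ (so only the vertices in $\{f+1,\dots,n-g\}$ can be toggled while remaining in $K_{n,f,g}^A$), one checks that these three sets form a partition, that every face relation $\tau \lhd \sigma$ satisfies $\mathrm{piece}(\tau) \leq \mathrm{piece}(\sigma)$, and that the matched pairs of type (c1), (c2)/(c3), (c4) lie entirely inside $A$, $C$, $B$ respectively. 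Consequently every alternating cycle is confined to a single piece. Within $A$ the matching is ``toggle the vertex $d$'', and within $C$ it is ``toggle the vertex $f+1$''; both are acyclic because a single-vertex toggle matching on a toggle-closed set admits no alternating cycle (if $\tau_{i+1} = \sigma_i \setminus \{w\}$ with $w$ different from the toggled vertex $v$, then $v \in \tau_{i+1}$ would force $\tau_{i+1}$ to be the upper end of its own matched edge, contradicting its position in the cycle). Within $B$, after the relabeling described in (c4), the matching coincides with the one on $K_{n-f-1, d-2-f, g}^A$ and is acyclic by the inductive hypothesis.

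Case (d) is simpler: the two simplices declared critical in (d1) are each other's $\xor (f+1)$-partner, so the set of non-critical simplices is toggle-closed, and the matching (d2) is the toggle-$(f+1)$ matching on this set, again acyclic by the observation above.

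The main obstacle is the bookkeeping in case (c): one must match up the clauses (c1)--(c4) of the matching with the three pieces $A$, $B$, $C$ and verify that the partition is compatible with the Hasse diagram. Once this is done, the Patchwork theorem together with the elementary toggle-matching observation reduces everything to the inductive hypothesis applied to the strictly smaller complex $K_{n-f-1, d-2-f, g}^A$.
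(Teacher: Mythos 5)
Your proof is correct and follows essentially the same route as the paper: induction on $n$, with case (c) handled by the Patchwork Theorem applied to the same three-block decomposition (your $A > B > C$ is exactly the paper's poset map onto $\{p_1 > p_4 > p_{2,3}\}$), and the remaining fibers and case (d) disposed of by the elementary single-vertex toggle observation.
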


\begin{proof}
  The proof is by induction on $n$, the case $n=0$ being trivial.
  In case (a) the matching is empty and so it is acyclic.
  In case (b), the matching on $\smash{K_{n-d,\, f-d,\, g}^A}$ is acyclic by induction, and therefore also the matching on $K^A_{n,f,g}$ is acyclic.
  
  Consider now case (c).
  Let $\eta\colon K^A_{n,f,g} \to \{ p_1 > p_4 > p_{2,3} \}$ be the poset map that sends $\sigma$ to: $p_1$, if subcase (c1) applies; $p_{2,3}$, if subcase (c2) or (c3) applies; $p_4$, if subcase (c4) applies.
  We have that $\eta(\sigma) = \eta(\tau)$ whenever $\sigma$ is matched with $\tau$.
  In addition, on each fiber $\eta^{-1}(p)$ the matching is acyclic (for $p=p_4$, this follows by induction).
  Therefore by the Patchwork Theorem \cite[Theorem 11.10]{kozlov2008combinatorial} the entire matching is acyclic.
  
  We are left with case (d). Here the matching is of the form $\{\sigma \longleftrightarrow \sigma \xor (f+1)\}$ (possibly leaving out a pair), so it is acyclic.
\end{proof}

\begin{lemma}
  Matching \ref{matching:An} is $\varphi_d$-weighted.
  \label{lemma:An-weighted}
\end{lemma}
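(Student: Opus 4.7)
The plan is to induct on $n$ and verify case by case that every pair $(\sigma,\tau)$ matched by Matching~\ref{matching:An} satisfies $v_{\varphi_d}(\sigma)=v_{\varphi_d}(\tau)$. The only ingredients I need are the formula $\omega_{\varphi_d}(A_k)=\lfloor(k+1)/d\rfloor$ of Section~\ref{sec:weights} and the additivity of $v_{\varphi_d}$ over the connected components of $\Gamma(\sigma)$.

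The base case and the local cases reduce to short computations. Case (a) is trivial. In (c1) the hypothesis $\{1,\dots,d-1\}\subseteq\sigma$ implies that toggling vertex $d$ either turns an $A_{d-1}$ component into an $A_d$ (both of weight $1$), or merges $A_{d-1}\sqcup A_{k-d-1}$ into $A_{k-1}$; the identity $\lfloor k/d\rfloor=1+\lfloor(k-d)/d\rfloor$ (valid for $k\geq d+1$) closes this case. In (c2) and (c3), the combination of $f\leq d-2$, $\{1,\dots,d-1\}\nsubseteq\sigma$, and the observation that (c3) cannot trigger when $f=d-2$ (since $\{f+2,\dots,d-1\}$ is then empty and thus vacuously contained in $\sigma$) forces every component involved in the toggle to have length at most $d-1$ and hence weight $0$, so splits and merges among them are trivially weight-preserving. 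The recursive cases (b) and (c4) reduce to the inductive hypothesis via weight-shift arguments: in (b) the identification $K^A_{n,f,g}\cong K^A_{n-d,f-d,g}$ obtained by stripping the common prefix $\{1,\dots,d\}$ shifts $v_{\varphi_d}$ by exactly $1$ on every simplex, while in (c4) the isolated component $\{1,\dots,f\}$ has weight $0$ and the remaining structure is governed by the recursive matching on $K^A_{n-f-1,d-2-f,g}$.

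The subtle point is case (d2), where I expect to spend the most care. I would split it into the two extremal simplices $\sigma_0=\{1,\dots,n\}$, $\sigma_1=\{1,\dots,f,f+2,\dots,n\}$ and everything else. For $\sigma\notin\{\sigma_0,\sigma_1\}$, the assumption $n<d+g$ gives $n-g\leq d-1$, and $\sigma$ must miss at least one vertex in $\{f+2,\dots,n-g\}$; the connected component of $\Gamma(\sigma\cup\{f+1\})$ containing $f+1$ is therefore contained in $\{1,\dots,d-1\}$, has weight $0$, and its split or merge under the toggle changes $v_{\varphi_d}$ by $0$. For $\sigma\in\{\sigma_0,\sigma_1\}$, the weight difference equals $\lfloor(n+1)/d\rfloor-\lfloor(n-f)/d\rfloor$, which is $1$ exactly when the half-open interval $(n-f,n+1]$ contains a multiple of $d$, i.e.\ when $n\equiv-1,0,\dots,f-1\pmod d$; these residues all appear in the critical list $-1,0,\dots,f\pmod d$ of (d1), so such pairs are never matched by (d2). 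The extra residue $n\equiv f\pmod d$ in that list corresponds to a toggle that actually preserves weight; declaring it critical anyway is over-conservative but does not violate $\varphi_d$-weightedness. The main obstacle throughout is keeping track of how the component structure around $f+1$ interacts with the residues of the component lengths modulo $d$; once this bookkeeping is in place, each sub-case collapses to a one-line comparison of two floor functions.
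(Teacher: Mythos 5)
Your proof is correct and follows essentially the same route as the paper's: induction on $n$, with the local cases handled by the formula $\omega_{\varphi_d}(A_k)=\lfloor (k+1)/d\rfloor$ and additivity over components, the recursive cases (b) and (c4) by a weight shift plus the inductive hypothesis, and case (d) by the small-component/merge dichotomy; your treatment of (d2) is in fact more careful than the paper's, which states an ``if and only if'' that misplaces the residue $n\equiv f\pmod d$ (a weight-preserving pair that the matching conservatively declares critical, exactly as you observe). One small slip worth fixing: in (c2)--(c3) and in the non-extremal part of (d2) you assert that components of size at most $d-1$ (or contained in $\{1,\dots,d-1\}$) have weight $0$, but $A_{d-1}$ has $\varphi_d$-weight $\lfloor d/d\rfloor=1$; fortunately your own hypotheses (a missing vertex in $\{f+2,\dots,d-1\}$, resp.\ in $\{f+2,\dots,n-g\}$ with $n-g\le d-1$) bound the relevant components by $d-2$ vertices, so the conclusion stands.
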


\begin{proof}
  This proof is also by induction on $n$.
  In case (b), removing the first $d$ vertices decreases all $\varphi_d$-weights by $1$, so the matching is $\varphi_d$-weighted by induction.
  
  Consider now case (c).
  In (c1) we have to check that $\omega_{\varphi_d}(A_{d-1} \sqcup A_k) = \omega_{\varphi_d}(A_{d+k})$ for all $k\geq 0$.
  This follows immediately from the formula for $\varphi_d$-weights (Section \ref{sec:weights}).
  In (c2)-(c3) we match simplices by adding or removing $f+1$, which alters the size of the leftmost connected component. However this component has always size $\leq d-2$, so it does not contribute to the $\varphi_d$-weight.
  Finally, in (c4) the matching is $\varphi_d$-weighted by induction.
  
  In case (d) most of the vertices belong to the rightmost connected component, which has size $\geq g$.
  If we add the vertex $f+1$ to a simplex $\sigma \in K^A_{n,f,g}$ not containing $f+1$, we either create a leftmost connected component of size $\leq d-2$, or we join the leftmost component with the rightmost component creating the full simplex $\{1,2,\dots,n\}$.
  In the former case, the leftmost component is small enough not to contribute to the $\varphi_d$-weight.
  In the latter case we have $\sigma = \{1,\dots, f,f+2,\dots,n\}$; it is easy to check that $\omega_{\varphi_d}(\sigma) = \omega_{\varphi_d}(\sigma \cup \{f+1\})$ if and only if $n\equiv f+1,\dots,d-2 \pmod d$.
\end{proof}

\begin{lemma}
  The critical simplices of Matching \ref{matching:An} are given by Table \ref{table:An-critical}.
  In particular, the matching is $\varphi_d$-precise for $f=g=0$.
\label{lemma:matching-An}
\end{lemma}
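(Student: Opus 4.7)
The plan is to induct on $n$, walking through the four cases of Matching~\ref{matching:An} and reading off the critical simplices in each. The base case $n=0$ (or more generally $f+g\geq n$, case (a)) is trivial: $K^A_{n,f,g}$ has at most one simplex, namely the full simplex $\{1,\dots,n\}$, and it is critical, matching the corresponding row of Table~\ref{table:An-critical}. In case (b), the canonical bijection $K^A_{n,f,g}\cong K^A_{n-d,\,f-d,\,g}$ (remove the first $d$ vertices and relabel) transports the matching on the nose, so by the inductive hypothesis the critical simplices of $K^A_{n,f,g}$ are exactly those obtained from the critical simplices of $K^A_{n-d,\,f-d,\,g}$ by re-adding $\{1,\dots,d\}$, which is again what the table predicts.

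For case (c), subcases (c1), (c2), (c3) partition $K^A_{n,f,g}\setminus\eta^{-1}(p_4)$ into matched pairs, so no critical simplex lives there, and the critical simplices of $K^A_{n,f,g}$ are exactly the images of the critical simplices of $K^A_{n-f-1,\,d-2-f,\,g}$ under the reindexing of (c4); the inductive hypothesis yields the table entry. Finally, case (d) is direct: (d2) matches $\sigma\leftrightarrow \sigma\xor(f+1)$ except for the explicit exceptional simplices in (d1), so criticality is exactly the congruence condition stated there. Assembling these four cases reproduces the rows of Table~\ref{table:An-critical}.

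For the preciseness claim when $f=g=0$, I would again follow the recursion. The reductions (b) and (c4) are compatible with the $\varphi_d$-weight (by the formulas recalled in Section~\ref{sec:weights} and as used in Lemma~\ref{lemma:An-weighted}), so the ``gap equals one'' condition on critical pairs pulls back to the smaller parameters, and it suffices to check it at the base of the recursion, where the only critical pair is the one in case (d1); here one verifies by hand that $v_{\varphi_d}(\{1,\dots,n\}) - v_{\varphi_d}(\{2,\dots,n\}) = 1$ exactly under the congruence conditions listed in (d1).

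\textbf{Main obstacle.} The subtle point is that the incidence number $[\sigma:\tau]^\M$ between two critical simplices is a sum over \emph{all} alternating paths, and a priori such a path could leave the recursive fiber and wander through simplices matched by (c1)--(c3) before returning. I plan to handle this using the same poset map $\eta\colon K^A_{n,f,g}\to\{p_1>p_4>p_{2,3}\}$ invoked in Lemma~\ref{lemma:An-acyclic}: $\eta$ is constant on matched pairs and weakly monotone along face edges, so along any alternating path between two critical simplices (both lying in $\eta^{-1}(p_4)$) the value of $\eta$ can only move in one direction, forcing the entire path to stay in $\eta^{-1}(p_4)$. The alternating paths in $K^A_{n,f,g}$ between critical simplices are therefore in bijection with those in $K^A_{n-f-1,\,d-2-f,\,g}$, with matching incidence signs, and the inductive hypothesis closes the argument.
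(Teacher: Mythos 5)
Your inductive skeleton for identifying the critical simplices is the same as the paper's, but the proposal stops exactly where the real work begins. The substance of the paper's proof is the congruence bookkeeping: each interval of Table~\ref{table:An-critical} is first rewritten as a pair of conditions, one on $n$ and one on $n-g$ modulo $d$ (Conditions~\eqref{eqn:An-first-interval} and~\eqref{eqn:An-second-interval}); one then checks that the reduction of case (b) preserves both conditions while the reduction of case (c4) \emph{swaps} them (Condition~\eqref{eqn:An-first-interval} for $K^A_{n-f-1,\,d-2-f,\,g}$ becomes Condition~\eqref{eqn:An-second-interval} for $K^A_{n,f,g}$ and vice versa), and that $|\sigma|-v_{\varphi_d}(\sigma)$ shifts by $d-1$ under (b) and by $f$ under (c4). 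None of this is ``what the table predicts'' on its face; asserting that ``the inductive hypothesis yields the table entry'' is precisely the step that needs proof. Moreover, the hand check you propose at the base case does not come out as you state over the whole printed range of (d1): for $n\equiv f\pmod d$ one finds $v_{\varphi_d}(\{1,\dots,n\})=v_{\varphi_d}(\{1,\dots,f,f+2,\dots,n\})$, a gap of $0$, not $1$ (e.g.\ $d=3$, $f=0$, $n=3$: both weights equal $1$). This exposes an off-by-one between (d1) as printed and the table; the condition consistent with Table~\ref{table:An-critical} and with the first part of Condition~\eqref{eqn:An-first-interval} is $n\equiv-1,0,\dots,f-1\pmod d$. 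For $f=g=0$ the offending residue never occurs along the recursion, so the preciseness claim of the lemma is unaffected, but your argument as written would break at exactly the point you flag as routine.

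For the preciseness claim your route is a genuine detour. Once Table~\ref{table:An-critical} is established, preciseness is immediate: for fixed $n,f,g,d$ at most one of the two congruence cases occurs (the intervals are disjoint), so all critical simplices share the same value of $|\sigma|-v_{\varphi_d}(\sigma)$; hence any two critical simplices in consecutive dimensions automatically satisfy $v_{\varphi_d}(\sigma)=v_{\varphi_d}(\tau)+1$, whether or not $[\sigma:\tau]^{\M}$ vanishes. This one-line observation is how the paper concludes preciseness for all of its matchings. Your $\eta$-confinement argument is correct as far as it goes ($\eta$ is order-preserving and constant on matched pairs, so it is non-increasing along alternating paths, and a path between two cells of $\eta^{-1}(p_4)$ cannot drop to $p_{2,3}$ and return), and the required strengthening of the induction to arbitrary $(f,g)$ does hold; but it answers a question---which incidence numbers are nonzero---that the preciseness condition never requires you to answer.
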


\begin{table}[htbp]
  \begin{center}
    {
    \tabulinesep=4pt
    \begin{tabu}{c|c|c|c}
      \multicolumn{2}{c|}{Case} & \# Critical & $|\sigma| - v_\varphi(\sigma)$ \\
      \hline \hline
      \multicolumn{2}{c|}{$f > n$ or $g > n$} & $0$ & - \\
      \hline
      \multicolumn{2}{c|}{$f,g \leq n$ and $f+g \geq n$} & $1$ & $n - \lfloor \frac{n+1}{d} \rfloor$ \\
      \hline
      \parbox[t]{4mm}{\multirow{3}{*}{\rotatebox[origin=c]{90}{$f+g < n$}}} &
      $\begin{array}{l}
        n \equiv \max(d-1, f_0 + g_0 + 1), \dots, \\
        \min(f_0+d-1,g_0+d-1) \mod d
        \end{array}$ &
      \multirow{2}{*}{$2$} &
      $n - \lfloor\frac{n-f}{d}\rfloor - \lfloor\frac{n-g}{d}\rfloor - 1$ \\
      \cline{2-2}\cline{4-4}
      & $\begin{array}{l}
        n \equiv \max(f_0, g_0), \dots, \\
        \min(f_0+g_0,d-2) \mod d
        \end{array}$ & &
      $n - \lfloor\frac{n-f}{d}\rfloor - \lfloor\frac{n-g}{d}\rfloor$ \\
      \cline{2-4}
      & else & $0$ & - \\
      \hline
    \end{tabu}}
  \end{center}
  \vskip0.3cm
  \caption{Critical simplices of Matching \ref{matching:An}.\\
  Here $f_0, g_0\in \{0,\dots, d-1\}$ are defined as $f$ mod $d$, and $g$ mod $d$, respectively.
  Moreover, the notation ``$n\equiv a,\dots, b \mod d$'' means that $n$ is conguent modulo $d$ to some integer in the closed interval $[a,b]$.}
  \label{table:An-critical}
\end{table}

\begin{remark}
  \ \\[-0.4cm]
  \begin{itemize}
   \item The conditions in Table \ref{table:An-critical} are symmetric in $f$ and $g$, even if the definition of Matching \ref{matching:An} is not.
   
   \item The two intervals of Table \ref{table:An-critical} in the case $f+g<n$ are always disjoint.

   \item For $g=0$, Matching \ref{matching:An} coincides with the matching defined in \cite[Section 5.1]{paolini2017weighted}.
   Table \ref{table:An-critical} simplifies a lot in this case, as both intervals contain only one element ($d-1$ and $f_0$, respectively).
   See \cite[Table 2]{paolini2017weighted}.
   
   \item If $f\equiv -1 \pmod d$ or $g\equiv -1 \pmod d$, then the two intervals of Table \ref{table:An-critical} are empty, thus there is at most 1 critical simplex.
   
   \item If $\sigma \to \tau$ is in the matching, then $\sigma = \tau \cup \{v \}$ with $v \equiv 0$ or $v \equiv f+1 \pmod d$.
   This can be easily checked by induction.
  \end{itemize}
\end{remark}

\begin{proof}[Proof of Lemma \ref{lemma:matching-An}]
  As a preliminary step, rewrite the two intervals of Table \ref{table:An-critical} as follows.
  \begin{itemize}
    \item The condition $n \equiv \max(d-1, f_0 + g_0 + 1), \dots, \min(f_0+d-1,g_0+d-1) \mod d$ is equivalent to:
    \begin{equation}
      \begin{cases} n \equiv -1,0,\dots,f_0-1 \pmod d, \\
      n-g \equiv f_0+1,\dots,d-1 \pmod d.
      \end{cases}
      \label{eqn:An-first-interval}
    \end{equation}
    
    \item The condition $n \equiv \max(f_0, g_0), \dots, \min(f_0+g_0,d-2) \mod d$ is equivalent to:
    \begin{equation}
      \begin{cases} n \equiv f_0,\dots,d-2 \pmod d, \\
	n-g \equiv 0,\dots,f_0 \pmod d.
      \end{cases}
      \label{eqn:An-second-interval}
    \end{equation}
  \end{itemize}
  Throughout the proof, we will refer to these two conditions as ``Condition \eqref{eqn:An-first-interval} (resp.\ \eqref{eqn:An-second-interval}) for $K_{n,f,g}^A$''.
  
  We prove the lemma by induction on $n$, the case $n=0$ being trivial.
  If $f+g \geq n$ there is nothing to prove, thus we can assume $f+g < n$.
  
  Suppose to be in case (b) of Matching \ref{matching:An}, i.e.\ $f\geq d$.
  Conditions \eqref{eqn:An-first-interval} and \eqref{eqn:An-second-interval} for $K_{n,f,g}^A$ are equivalent to those for $K_{n-d,\,f-d,\,g}^A$.
  The critical simplices of $K_{n,f,g}^A$ are in one-to-one correspondence with the critical simplices of $K_{n-d,\,f-d,\,g}^A$ via removal of the first $d$ vertices.
  This correspondence decreases the size of a simplex by $d$, and decreases the $\varphi_d$-weight by $1$.
  Adding $d-1$ to the values of the last column of Table \ref{table:An-critical} for $K_{n-d,\,f-d,\,g}^A$, we exactly recover Table \ref{table:An-critical} for $K_{n,f,g}^A$.
  
  Suppose to be in case (c), i.e.\ $f\leq d-1$ and $n \geq d+g$.
  Critical simplices only arise from subcase (c4).
  Notice that $(d-2-f) + g < (n-f-1)$, so by induction the critical simplices of $K_{n-f-1,\,d-2-f,\,g}^A$ are described by the last three rows of Table \ref{table:An-critical}.
  Condition \eqref{eqn:An-first-interval} (resp.\ \eqref{eqn:An-second-interval}) for $K_{n-f-1,\,d-2-f,\,g}^A$ coincides with Condition \eqref{eqn:An-second-interval} (resp. \eqref{eqn:An-first-interval}) for $K_{n,f,g}^A$.
  The critical simplices of $K_{n,f,g}^A$ are in one-to-one correspondence with the critical simplices of $K_{n-f-1,\,d-2-f,\,g}^A$ via removal of the first $f$ vertices.
  This correspondence decreases the size by $f$, and leaves the $\varphi_d$-weight unchanged.
  Adding $f$ to the values of the last column of Table \ref{table:An-critical} for $K_{n-f-1,\,d-2-f,\,g}^A$, we recover Table \ref{table:An-critical} for $K_{n,f,g}^A$.
  
  Suppose to be in case (d), i.e., $f \leq d-2$ and $n < d+g$. Since $f+g < n$, we must have $n-g \in \{f+1, \dots, d-1\}$. In particular Condition \eqref{eqn:An-second-interval} cannot hold, and the first part of Condition \eqref{eqn:An-first-interval} holds.
  Case (d1) happens if and only if $n\equiv -1,0,\dots,f \pmod d$, i.e., if and only if the second part of Condition \eqref{eqn:An-first-interval} holds.
  If this happens then there are two critical simplices, namely $\{1,\dots,n\}$ and $\{1,\dots,f,f+2,\dots,n\}$. The difference $|\sigma| - v_\varphi(\sigma)$ is the same for these two simplices, and is given by $n - \lfloor\frac{n+1}{d}\rfloor = (n-1) - \lfloor\frac{n-f}{d}\rfloor$. Since $\lfloor\frac{n-g}{d}\rfloor = 0$, we can rewrite it also as $n-\lfloor\frac{n-f}{d}\rfloor - \lfloor\frac{n-g}{d}\rfloor-1$.
\end{proof}

\section{Case \texorpdfstring{$D_n$}{Dn}}
\label{sec:Dn}

In \cite{paolini2017weighted} precise matchings were constructed for $A_n$ and $B_n$, but the third infinite family of groups of finite type, namely $D_n$, was left out (see Figure \ref{fig:Dn}).

\begin{figure}[htbp]
  \begin{tikzpicture}
\begin{scope}[every node/.style={circle,thick,draw,inner sep=2.5}, every label/.style={rectangle,draw=none}]
  \node (1) at (135.0:1.4) [label={above,minimum height=13}:$1$] {};
  \node (2) at (225.0:1.4) [label={above,minimum height=13}:$2$] {};
  \node (3) at (0.0,0) [label={above,minimum height=13}:$3$] {};
  \node (4) at (1.4,0) [label={above,minimum height=13}:$4$] {};
  \node (5) at (2.8,0) [label={above,minimum height=13}:$5$] {};
  \node (n-1) at (4.2,0) [label={above,minimum height=13}:$n-1$] {};
  \node (n) at (5.6,0) [label={above,minimum height=13}:$n$] {};
\end{scope}
\begin{scope}[every edge/.style={draw=black!60,line width=1.2}]
  \path (1) edge node {} (3);
  \path (2) edge node {} (3);
  \path (3) edge node {} (4);
  \path (4) edge node {} (5);
  \path (5) edge node [fill=white, rectangle, inner sep=3.0, minimum height = 0.5cm] {$\ldots$} (n-1);
  \path (n-1) edge node {} (n);
\end{scope}
\begin{scope}[every edge/.style={draw=black, line width=3}]
\end{scope}
\begin{scope}[every node/.style={draw,inner sep=11.5,yshift=-4}, every label/.style={rectangle,draw=none,inner sep=6.0}, every fit/.append style=text badly centered]
\end{scope}
\end{tikzpicture}
   \caption{A Coxeter graph of type $D_n$.}
  \label{fig:Dn}
\end{figure}
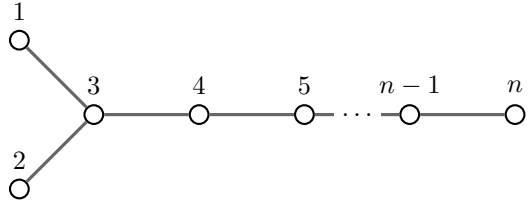

For $n\geq 4$ let $(\W_{\! D_n}, S)$ be a Coxeter system of type $D_n$, with generating set $S = \{1,2,\dots,n\}$, and let $K_n^D = K_{\W_{\! D_n}}$.
We are going to construct a $\varphi_d$-precise matching on $K_n^D$.
We actually split the definition according to the parity of $d$, and for $d$ even we construct a matching on each
\[ K_{n,g}^D = \{ \sigma \in K_n^D \mid \{ n-g+1, n-g+2. \dots, n\} \subseteq \sigma \}, \]
for $0 \leq g \leq n-1$. We will need this construction for $\tilde B_n$ and $\tilde D_n$.

\begin{breakmatching}[$\varphi_d$-matching on $K_{n}^D$ for $d$ odd]
  \begin{enumerate}[(a)]
    \item If $1 \in \sigma$ then match $\sigma$ with $\sigma \xor 2$.
    \item Otherwise, relabel the vertices $\{2,\dots, n\}$ as $\{1,\dots, n-1\}$ and construct the matching as in $K_{n-1}^A$.
  \end{enumerate}
  \label{matching:Dn-odd}
\end{breakmatching}

\begin{breakmatching}[$\varphi_d$-matching on $K_{n,g}^D$ for $d$ even]
  \begin{enumerate}[(a)]
    \item If $2\not\in \sigma$, relabel the vertices $\{1,3,4,\dots,n\}$ as $\{1,\dots,n-1\}$ and construct the matching as in $K_{n-1,\,0,\,g}^A$.
    
    \item Otherwise, if $d=2$ and $\{1,2,3,4\} \nsubseteq \sigma$, proceed as follows.
    \begin{enumerate}
      \item[(b1)] If $\{1,2,4\} \subseteq \sigma$: match $\sigma$ with $\sigma \xor 5$ if possible (i.e.\ if $n-g \geq 5$); else $\sigma$ is critical.
      \item[(b2)] Otherwise: match $\sigma$ with $\sigma \xor 3$ if possible (i.e.\ if $n-g \geq 3$); else $\sigma$ is critical.
    \end{enumerate}
    
    \item Otherwise, if $d\geq 4$ and $3\not\in\sigma$, match $\sigma$ with $\sigma\xor 1$.
    
    \item Otherwise, if $d=4$ and $4\not\in\sigma$ (recall that at this point $\{2,3\}\subseteq \sigma$), ignore vertex $1$, relabel vertices $\{5,\dots,n\}$ as $\{1,\dots,n-4\}$, and construct the matching as in $K_{n-4, \, 0, \, g}^A$.
    
    \item Otherwise, if $d\geq 6$ and $4\not\in\sigma$, match $\sigma$ with $\sigma \xor 1$.
    
    \item Otherwise, if $d\geq 4$ and $1\not\in\sigma$, proceed as follows. Recall that at this point $\{2,3,4\} \subseteq \sigma$.
    \begin{enumerate}
      \item[(f1)] If $\{2,\dots,\frac d2+1\} \subseteq \sigma$, relabel the vertices $\{2,\dots, n\}$ as $\{ 1,\dots, n-1\}$ and construct the matching as in $K_{n-1,\, \max(\frac d2, \, 3), \, g}^A$.
      
      \item[(f2)] Otherwise, match $\sigma$ with $\sigma \cup \{1\}$.
    \end{enumerate}
    
    \item Otherwise, proceed as follows. Recall that at this point $\{1,2,3,4\} \subseteq \sigma$.
    Let $k \geq 4$ be the size of the connected component $\Gamma_1(\sigma)$ of the vertex $1$, in the subgraph $\Gamma(\sigma)\subseteq \Gamma$ induced by $\sigma$.
    Write $k = q\frac d2 + r$ where:
    \[ \begin{cases}
      0 < r < \frac d2 & \text{if $k\not\equiv 0 \pmod {\frac d2}$}; \\
      r \in \{0, \frac d2\} \text{ and $q$ even} & \text{if $k\equiv 0 \pmod {\frac d2}$}.
    \end{cases} \]
    Define a vertex $v$ as follows:
    \[ v = \begin{cases}
            q\frac d2 + 1 & \text{if $q$ is even}; \\
            q\frac d2 + 2 & \text{if $q$ is odd}.
           \end{cases} \]
    It can be checked that $v=1$ or $v\geq 5$.
    The idea now is that most of the times $\sigma \xor v$ has the same $\varphi_d$-weight as $\sigma$.
    Unfortunately there are some exceptions, so we still have to examine a few subcases.
    
    \begin{enumerate}
      \item[(g1)] Suppose $v\in\sigma$.
      If $v \leq n-g$, match $\sigma$ with $\sigma \setminus \{v \}$.
      Otherwise $\sigma$ is critical.
      
      \item[(g2)] Suppose $v\not\in\sigma$. Match $\sigma$ with $\sigma \cup \{v\}$, unless one of the following occurs.
      \begin{enumerate}
       \item[(g2.1)] $v > n$ (i.e.\ the vertex $v$ doesn't exist in $S$). Then $\sigma$ is critical.
       
       \item[(g2.2)] $q$ is even, and $\{q\frac d2 + 2, \dots, (q+1)\frac d2 + 1\} \subseteq \sigma$. In this case the connected components $\Gamma_1(\sigma)$ and $\Gamma_1(\sigma\cup\{v\})$ have a different $\varphi_d$-weight.
       Then ignore the vertices $\leq q\frac d2 + 1$, relabel the vertices $\{q\frac d2 + 2, \dots, n\}$ as $\{1, \dots, n-q\frac d2 - 1\}$ and construct the matching as in $K_{n-q\frac d2 - 1, \, \frac d2, \, g}^A$.
       
       \item[(g2.3)] $q$ is odd, and $\{q\frac d2 + 3, \dots, (q+1)\frac d2\} \subseteq \sigma$. Similarly to case (g2.2), relabel the vertices and construct the matching as in $K_{n-q\frac d2 - 2, \, \frac d2 - 2, \, g}^A$.
      \end{enumerate}

    \end{enumerate}

  \end{enumerate}
  \label{matching:Dn-even}
\end{breakmatching}

\begin{lemma}
  Matchings \ref{matching:Dn-odd} and \ref{matching:Dn-even} are acyclic and $\varphi_d$-weighted.
  Critical simplices for these matchings on $K_n^D$ are given by Tables \ref{table:Dn-critical-odd} and \ref{table:Dn-critical-even}.
  In particular, both matchings on $K_n^D$ are $\varphi_d$-precise.
  \label{lemma:matching-Dn}
\end{lemma}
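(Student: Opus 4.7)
My approach is to treat the two matchings separately, using the Patchwork Theorem to reduce each to smaller, better-understood pieces, with the bulk of the work being the careful case analysis in the even case.

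\emph{Odd $d$.} The fiber map $\sigma\mapsto [1\in\sigma]$ is preserved by Matching \ref{matching:Dn-odd}: case (a) toggles $2$ while fixing $1$, and case (b) acts only on $\{2,\ldots,n\}$. On the fiber $\{1\not\in\sigma\}$ the matching is, after the obvious relabelling, Matching \ref{matching:An} on $K^A_{n-1,0,0}$, so acyclicity, $\varphi_d$-weightedness, the critical simplex enumeration and $\varphi_d$-preciseness all descend from Lemmas \ref{lemma:An-acyclic}, \ref{lemma:An-weighted} and \ref{lemma:matching-An} with $f=g=0$. On the fiber $\{1\in\sigma\}$ the matching is the trivial pairing $\sigma\leftrightarrow\sigma\xor 2$, which is obviously acyclic. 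To verify weightedness here I would split on whether $3\in\sigma$: if $3\not\in\sigma$, vertices $1$ and possibly $2$ sit in isolated $A_1$'s of weight $0$; if $3\in\sigma$, toggling $2$ swaps a component of type $A_{k+1}$ with one of type $D_{k+2}$, and the identity
\[ \omega_{\varphi_d}(A_{k+1}) \;=\; \lfloor (k+2)/d\rfloor \;=\; \omega_{\varphi_d}(D_{k+2}), \]
valid precisely for $d$ odd by Section \ref{sec:weights}, supplies the required equality. Since no critical simplex satisfies $1\in\sigma$, Table \ref{table:Dn-critical-odd} and preciseness both reduce to the corresponding statements for Matching \ref{matching:An}.

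\emph{Even $d$.} I would argue by induction on $n$, again via a Patchwork argument. The definition of Matching \ref{matching:Dn-even} stratifies $K^D_{n,g}$ by cases (a)--(g) and their subcases, and these strata are preserved by the local toggles, so Patchwork applies with the case-label map. In cases (a), (d), (f1), (g2.2) and (g2.3) the matching reduces by relabelling to Matching \ref{matching:An} on some $K^A_{n',f',g'}$, and all desired properties follow from Lemma \ref{lemma:matching-An}. In the remaining ``toggle'' cases (b), (c), (e), (f2), (g1), (g2) acyclicity is immediate, and one only needs to verify $\varphi_d$-weightedness; this is a direct computation using the two-regime formula for $\omega_{\varphi_d}(D_n)$ from Section \ref{sec:weights}.

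The main obstacle, and heart of the argument, is case (g). Here $\{1,2,3,4\}\subseteq\sigma$, so $\Gamma_1(\sigma)$ is of type $D_k$ with $k\geq 4$, and the vertex $v=q\tfrac{d}{2}+1$ or $q\tfrac{d}{2}+2$ is chosen so that toggling $v$ either enlarges $\Gamma_1$ by one vertex (keeping it of $D$-type) or detaches an isolated vertex of weight $0$. The key identity to verify is
\[ \omega_{\varphi_d}(D_k) \;=\; \omega_{\varphi_d}(D_{k+1}), \]
which, via the piecewise formula that distinguishes $d\mid n$ from $d\nmid n$, holds precisely outside a narrow transition window around multiples of $d/2$; the definition of $v$ in terms of $(q,r)$ is engineered exactly to exit this window, and I would verify it by case analysis on the parity of $q$ and the residue $r$. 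The exceptional subcases (g2.1)--(g2.3) cover the remaining configurations: (g2.1) when $v\notin S$, and (g2.2)--(g2.3) when the toggle would create or destroy an adjacent $A$-component of size $\tfrac{d}{2}$ or $\tfrac{d}{2}-1$ whose weight contribution fails to cancel against the change in the $D$-component. In these two subcases the matching is delegated to an $A$-type matching on some $K^A_{\cdot,\cdot,g}$, so $\varphi_d$-weightedness, the critical simplex enumeration of Table \ref{table:Dn-critical-even} and preciseness all follow inductively from Lemma \ref{lemma:matching-An} by tracking alternating paths between critical simplices through the recursive reductions.
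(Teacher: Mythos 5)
Your proposal takes essentially the same route as the paper, whose own proof of Lemma \ref{lemma:matching-Dn} is only a sketch ("similar to Lemmas \ref{lemma:An-acyclic} and \ref{lemma:An-weighted}"): a Patchwork/induction argument that delegates acyclicity, weightedness and the critical-cell count to the $A$-type matchings of Section \ref{sec:An}, plus direct weight checks on the toggle cases. Two remarks. First, for preciseness the paper's argument is shorter than your "tracking alternating paths": once the tables are established, for each fixed $(n,d)$ the quantity $|\sigma|-v_{\varphi_d}(\sigma)$ takes a single value on all critical simplices, so $[\sigma:\tau]^{\M}\neq 0$ (which forces $|\sigma|=|\tau|+1$) automatically gives $v_{\varphi_d}(\sigma)=v_{\varphi_d}(\tau)+1$; no analysis of the Morse boundary is needed. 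Second, your description of case (g) is imprecise: when $v\in\sigma$ the toggle does not pass from $D_k$ to $D_{k+1}$ but splits $\Gamma_1(\sigma)=\{1,\dots,k\}$ into a $D_{v-1}$ and an $A_{k-v}$ (a segment, not an isolated vertex), so the identity to check is $\omega_{\varphi_d}(D_k)=\omega_{\varphi_d}(D_{v-1})+\omega_{\varphi_d}(A_{k-v})$, with $\omega_{\varphi_d}(A_{k-v})=0$ because $k-v<\tfrac d2$; the identity $\omega_{\varphi_d}(D_k)=\omega_{\varphi_d}(D_{k+1})$ is only the relevant one in the branch $v=k+1$ with no merging. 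These are the computations your planned case analysis on $(q,r)$ would have to carry out, and they do go through, so this is a matter of precision rather than a gap.
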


\begin{table}[htbp]
  \begin{center}
    {
    \tabulinesep=4pt
    \begin{tabu}{c|c|c}
      Case & \# Critical & $|\sigma| - v_\varphi(\sigma)$ \\
      \hline \hline
      
      $n\equiv 0 \pmod d$ & \multirow{2}{*}{$2$} & $n - 2\,\frac{n}{d}$ \\
      \cline{1-1}\cline{3-3}
      $n \equiv 1 \pmod d$ & & $n - 2\,\frac{n-1}{d} - 1$ \\
      \cline{1-3}
      else & $0$ & - \\
      \hline
    \end{tabu}}
  \end{center}
  \vskip0.3cm
  \caption{Critical simplices of Matching \ref{matching:Dn-odd} (case $D_n$, $d$ odd).}
  \label{table:Dn-critical-odd}
\end{table}

\begin{table}[htbp]
  \begin{center}
    {
    \tabulinesep=4pt
    \begin{tabu}{c|c|c}
      Case & Origin & $|\sigma| - v_\varphi(\sigma)$ \\
      \hline \hline
      
      $n\equiv 0 \pmod d$ &
      \begin{minipage}{0.33\textwidth}
        (a), (b) for $n=4$ and $d=2$, (d) for $d=4$, (f) for $d\geq 6$ or $n=d=4$, (g2.1), (g2.2), (g2.3)
      \end{minipage}
      & $n - 2 \frac nd$ 
      \\
      \hline
      
      $n\equiv 1 \pmod d$ &
      \begin{minipage}{0.33\textwidth}
        (a)
      \end{minipage}
      & $n - 2 \frac {n-1}{d} - 1$ \\
      \hline
      
      $n\equiv \frac d2 + 1 \pmod d$ for $d\geq 4$ &
      \begin{minipage}{0.33\textwidth}
        (d) for $d=4$, (f) for $d\geq 6$, (g2.1), (g2.2), (g2.3)
      \end{minipage}
      & $n - 2 \frac {n-1}{d}$ \\
      \hline
      
      else & - & - \\
      \hline
    \end{tabu}}
  \end{center}
  \vskip0.3cm
  \caption{Critical simplices of Matching \ref{matching:Dn-even} (case $D_n$, $d$ even) for $g=0$.
  In the second column we indicate in which parts of Matching \ref{matching:Dn-even} the critical simplices arise.}
  \label{table:Dn-critical-even}
\end{table}

\begin{proof}[Sketch of proof]
  The proof is similar to those of Lemmas \ref{lemma:An-acyclic} and \ref{lemma:An-weighted}.
  For every $d \geq 2$, the quantity $|\sigma| - v_{\varphi_d}(\sigma)$ is constant among the critical simplices.
  Therefore the matchings are $\varphi_d$-precise.
\end{proof}

\section{Case \texorpdfstring{$\tilde B_n$}{tilde Bn}}
\label{sec:tBn}

Consider now, for $n \geq 3$, an affine Coxeter system $(\W_{\!\tilde B_n}, S)$ of type $\tilde B_n$ (see Figure \ref{fig:tBn}).
Throughout this section, let $\smash{K_n = K_{\W_{\!\tilde B_n}}}$.
We are going to describe a $\varphi_d$-precise matching on $K_n$.
For $d$ odd the matching is very simple, and has exactly one critical simplex. For $d$ even the situation is more complicated.

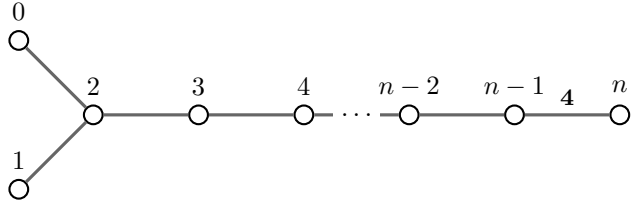
\begin{figure}[htbp]
  \begin{tikzpicture}
\begin{scope}[every node/.style={circle,thick,draw,inner sep=2.5}, every label/.style={rectangle,draw=none}]
  \node (0) at (135.0:1.4) [label={above,minimum height=13}:$0$] {};
  \node (1) at (225.0:1.4) [label={above,minimum height=13}:$1$] {};
  \node (2) at (0.0,0) [label={above,minimum height=13}:$2$] {};
  \node (3) at (1.4,0) [label={above,minimum height=13}:$3$] {};
  \node (4) at (2.8,0) [label={above,minimum height=13}:$4$] {};
  \node (n-2) at (4.2,0) [label={above,minimum height=13}:$n-2$] {};
  \node (n-1) at (5.6,0) [label={above,minimum height=13}:$n-1$] {};
  \node (n) at (7.0,0) [label={above,minimum height=13}:$n$] {};
\end{scope}
\begin{scope}[every edge/.style={draw=black!60,line width=1.2}]
  \path (0) edge node {} (2);
  \path (1) edge node {} (2);
  \path (2) edge node {} (3);
  \path (3) edge node {} (4);
  \path (4) edge node [fill=white, rectangle, inner sep=3.0, minimum height = 0.5cm] {$\ldots$} (n-2);
  \path (n-2) edge node {} (n-1);
  \path (n-1) edge node {} node[inner sep=3, above] {\bf\small 4} (n);
\end{scope}
\begin{scope}[every edge/.style={draw=black, line width=3}]
\end{scope}
\begin{scope}[every node/.style={draw,inner sep=11.5,yshift=-4}, every label/.style={rectangle,draw=none,inner sep=6.0}, every fit/.append style=text badly centered]
\end{scope}
\end{tikzpicture}
   \caption{A Coxeter graph of type $\tilde B_n$.}
  \label{fig:tBn}
\end{figure}

\begin{matching}[$\varphi_d$-matching on $K_n = K_{\W_{\!\tilde B_n}}$ for $d$ odd]
  For $\sigma \neq \{1,2,\dots, n\}$, match $\sigma$ with $\sigma \xor 0$.
  Then $\{1,2,\dots, n\}$ is the only critical simplex.
  \label{matching:tBn-odd}
\end{matching}

\begin{matching}[$\varphi_d$-matching on $K_n = K_{\W_{\!\tilde B_n}}$ for $d$ even]
  For $\sigma \in K_n$, let $k$ be the size of the connected component $\Gamma_n(\sigma)$ of the vertex $n$, in the subgraph $\Gamma(\sigma) \subseteq \Gamma$ induced by $\sigma$.
  Let $k = q \frac d2 + r$, with $0\leq r < \frac d2$.
  \begin{enumerate}[(a)]
    \item If $r \geq 1$ match $\sigma$ with $\sigma \xor \big(n - q\frac d2\big)$, unless $\sigma = \{0,2,3,\dots,n\}$ and $r=1$ (in this case $\sigma$ is critical).
    
    \item If $r = 0$ and $\big\{ n - (q+1)\frac d2 + 1, \dots, n - q\frac d2 - 1 \big\} \subseteq \sigma$: ignore vertices $\geq n - q\frac d2$, relabel vertices $\{0,1,\dots, n-q\frac d2 - 1\}$ as $\{1,2, \dots, n - q\frac d2\}$, and construct the matching as in $K^D_{n-q\frac d2, \, \frac d2 - 1}$.
    
    \item If $r = 0$ and $\big\{ n - (q+1)\frac d2 + 1, \dots, n - q\frac d2 - 1 \big\} \nsubseteq \sigma$, proceed as follows.
    \begin{enumerate}
      \item[(c1)] If $|\sigma|=n$ (i.e.\ $\sigma$ is either $\{0,2,3,\dots, n\}$ or $\{1,2,3,\dots, n\}$), then $\sigma$ is critical.
      \item[(c2)] If $n = (q+1)\frac d2$ and $\sigma = \{0,2,3, \dots, n-q\frac d2 - 1, n-q\frac d2 + 1, \dots, n\}$, then $\sigma$ is critical.
      \item[(c3)] Otherwise, match $\sigma$ with $\sigma \xor \big( n - q\frac d2 \big)$.
    \end{enumerate}
  \end{enumerate}
  \label{matching:tBn-even}
\end{matching}

\begin{lemma}
  Matchings \ref{matching:tBn-odd} and \ref{matching:tBn-even} are acyclic and $\varphi_d$-weighted.
  For $d$ odd, Matching \ref{matching:tBn-odd} has exactly one critical simplex $\sigma$ which satisfies $|\sigma| - v_{\varphi_d}(\sigma) = n - \big\lfloor \frac nd \big\rfloor$.
  For $d$ even, all critical simplices $\sigma$ of Matching \ref{matching:tBn-even} satisfy $|\sigma| - v_{\varphi_d}(\sigma) = n - \big\lfloor \frac{n}{d/2} \big\rfloor$.
  In particular, both matchings are $\varphi_d$-precise.
  \label{lemma:matching-tBn}
\end{lemma}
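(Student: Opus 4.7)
The plan is to verify the three assertions of the lemma -- acyclicity, $\varphi_d$-weightedness, and the weight formula for critical simplices -- separately for the odd and even cases, and then to conclude preciseness via the general principle (recalled in Section \ref{sec:framework}) that a $\varphi_d$-weighted acyclic matching is $\varphi_d$-precise whenever the quantity $|\sigma|-v_{\varphi_d}(\sigma)$ is constant on critical simplices, since this forces the weight gap of $1$ across any non-zero $[\sigma:\tau]^\M$ with $|\sigma|=|\tau|+1$.

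For the odd case (Matching \ref{matching:tBn-odd}), acyclicity is immediate: the rule $\sigma\leftrightarrow\sigma\xor 0$ is an element-based involution, and the only simplex for which $\sigma\xor 0$ falls outside $K_n$ is $\{1,\dots,n\}$, which is the single critical simplex. For $\varphi_d$-weightedness I would analyze the component of $\sigma$ that is affected by adding or removing $0$. If $2\notin\sigma$, then vertex $0$ is an isolated component contributing weight zero, so the move is weight-neutral. Otherwise the component at $2$ changes type by one vertex -- from $A_k$ to $A_{k+1}$, from $B_k$ to $B_{k+1}$, or (if $1\in\sigma$) from a path $A_\ell$ to $D_{\ell+1}$ -- and one checks by direct comparison of the floor functions in Section \ref{sec:weights} that for $d$ odd the $\varphi_d$-weight is unaffected, the unique residue class where equality would fail being exactly the one encoding the excluded simplex $\{1,\dots,n\}$. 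Since the critical simplex is of type $B_n$ with $v_{\varphi_d}=\lfloor n/d\rfloor$, we recover $|\sigma|-v_{\varphi_d}(\sigma)=n-\lfloor n/d\rfloor$.

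For the even case (Matching \ref{matching:tBn-even}), acyclicity is best handled by the Patchwork Theorem \cite[Theorem 11.10]{kozlov2008combinatorial}, using a poset map $\eta$ whose fibers correspond to the structural cases (a)/(b)/(c1)/(c2)/(c3) and whose value is constant on matched pairs. On fibers where the matching is a $\sigma\xor v$ involution (subcases (a), most of (c), (g1), (g2) outside (g2.2)-(g2.3)) acyclicity is trivial; on fiber (b) it reduces, via the relabeling indicated, to the acyclicity of Matching \ref{matching:Dn-odd}/\ref{matching:Dn-even} on $K^D_{n-q d/2,\,d/2-1}$, which is Lemma \ref{lemma:matching-Dn}; and on fibers (g2.2)/(g2.3) it reduces to the acyclicity of Matching \ref{matching:An} on a complex of the form $K^A_{n',\,f,\,g}$, which is Lemma \ref{lemma:An-acyclic}. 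The $\varphi_d$-weightedness is verified case by case: the choice of the vertex $v=q\tfrac{d}{2}+1$ (resp.\ $q\tfrac{d}{2}+2$ when $q$ is odd) is calibrated so that the component $\Gamma_n(\sigma)$ at vertex $n$ traverses exactly a complete $B_{d/2}$-block under the move $\sigma\xor v$, and the $B$- and $D$-weight formulas in Section \ref{sec:weights} exhibit the needed invariance across such a block.

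Identifying the critical simplices and verifying the uniform value $|\sigma|-v_{\varphi_d}(\sigma)=n-\lfloor n/(d/2)\rfloor$ is where the bulk of the bookkeeping lies. The critical simplices arise (i) from case (b), via pullback along the relabeling from the critical simplices of $K^D_{n-q d/2,\,d/2-1}$ whose values are given by Tables \ref{table:Dn-critical-odd}-\ref{table:Dn-critical-even}; (ii) from the explicitly marked cases (c1), (c2), (g1), (g2.1); and (iii) from cases (g2.2)-(g2.3), where they inherit via the relabeling from the critical simplices of $K^A_{n',\,d/2,\,g}$ or $K^A_{n',\,d/2-2,\,g}$ tallied in Table \ref{table:An-critical}. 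In each group one must add back the contribution of the ignored/relabeled vertices to both $|\sigma|$ and $v_{\varphi_d}(\sigma)$ and check that the net outcome is always $n-\lfloor n/(d/2)\rfloor$; I expect the hardest step to be in case (g), where the residue shifts that appear in the formulas of Table \ref{table:An-critical} (depending on $f_0=d/2 \bmod d$ or $d/2-2 \bmod d$) must cancel with the $q\tfrac{d}{2}$ vertices absorbed by $\Gamma_n(\sigma)$. Once this case-by-case match is complete, preciseness follows immediately from the initial reduction.
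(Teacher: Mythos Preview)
Your overall plan mirrors the paper's sketch exactly: verify acyclicity and weightedness case by case (via the Patchwork Theorem and the already-proved lemmas for $A_n$ and $D_n$), then observe that $|\sigma|-v_{\varphi_d}(\sigma)$ is constant on critical simplices, forcing preciseness. The paper gives no more detail than this.

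There is, however, a concrete gap in your odd-case weightedness argument. You assert that for $d$ odd the transitions $A_k\to A_{k+1}$ and $B_k\to B_{k+1}$ (which arise when $2\in\sigma$ but $1\notin\sigma$) leave the $\varphi_d$-weight unchanged. They do not: $\omega_{\varphi_d}(A_k)=\lfloor(k+1)/d\rfloor$ jumps whenever $d\mid(k+2)$, and $\omega_{\varphi_d}(B_k)=\lfloor k/d\rfloor$ jumps when $d\mid(k+1)$. Concretely, for $d=3$ and any $n\geq 3$ the pair $\{2\}\leftrightarrow\{0,2\}$ has weights $0$ and $1$. The only transition that is genuinely weight-preserving for all odd $d$ is $A_\ell\to D_{\ell+1}$, and that one requires $1\in\sigma$. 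So your claim that ``the unique residue class where equality would fail'' is the one encoding the excluded simplex $\{1,\ldots,n\}$ is false: the obstructions live in the $1\notin\sigma$ subfamily, not at the excluded face. (Compare Matching~\ref{matching:tDn-odd}, where precisely this subfamily is handled by recursion to $K^D_n$ rather than by toggling~$0$.) You need to revisit this case rather than declare it checked.

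A smaller issue: in your even-case discussion you repeatedly cite subcases (g1), (g2), (g2.1)--(g2.3), but Matching~\ref{matching:tBn-even} has only cases (a), (b), (c1)--(c3). The (g)-labels belong to Matching~\ref{matching:Dn-even}, which is invoked recursively \emph{inside} case~(b). If you intend to unfold that recursion and track critical simplices through it, say so and keep the two layers of case labels separate; as written, your references do not parse against the matching you are analyzing.
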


\begin{proof}[Sketch of proof]
  The first part can be proved along the lines of Lemmas \ref{lemma:An-acyclic} and \ref{lemma:An-weighted}.
  The formulas for $|\sigma| - v_{\varphi_d}(\sigma)$ can be checked examining all critical simplices.
  Finally, since the quantity $|\sigma| - v_{\varphi_d}(\sigma)$ is constant among the critical simplices, both matchings are $\varphi_d$-precise.
\end{proof}

\section{Case \texorpdfstring{$\tilde D_n$}{tilde Dn}}
\label{sec:tDn}

In this section we consider a Coxeter system $(\W_{\!\tilde D_n}, S)$ of type $\tilde D_n$, for $n \geq 4$ (see Figure \ref{fig:tDn}).
Throughout this section, let $\smash{K_n = K_{\W_{\!\tilde D_n}}}$.
We are going to describe a $\varphi_d$-precise matching on $K_n$.
Again, this will be easier for $d$ odd and quite involved for $d$ even.

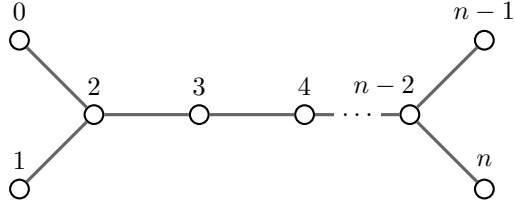
\begin{figure}[htbp]
  \begin{tikzpicture}
\begin{scope}[every node/.style={circle,thick,draw,inner sep=2.5}, every label/.style={rectangle,draw=none}]
  \node (0) at (135.0:1.4) [label={above,minimum height=13}:$0$] {};
  \node (1) at (225.0:1.4) [label={above,minimum height=13}:$1$] {};
  \node (2) at (0.0,0) [label={above,minimum height=13}:$2$] {};
  \node (3) at (1.4,0) [label={above,minimum height=13}:$3$] {};
  \node (4) at (2.8,0) [label={above,minimum height=13}:$4$] {};
  \node (n-2) at (4.2,0) [label={93,minimum height=13}:$n-2\!\!\!$] {};
  \node (n-1) at ($(4.2,0) + (45.0:1.4)$) [label={above,minimum height=13}:$n-1$] {};
  \node (n) at ($(4.2,0) + (-45.0:1.4)$) [label={above,minimum height=13}:$n$] {};
\end{scope}
\begin{scope}[every edge/.style={draw=black!60,line width=1.2}]
  \path (0) edge node {} (2);
  \path (1) edge node {} (2);
  \path (2) edge node {} (3);
  \path (3) edge node {} (4);
  \path (4) edge node [fill=white, rectangle, inner sep=3.0, minimum height = 0.5cm] {$\ldots$} (n-2);
  \path (n-2) edge node {} (n-1);
  \path (n-2) edge node {} (n);
\end{scope}
\begin{scope}[every edge/.style={draw=black, line width=3}]
\end{scope}
\begin{scope}[every node/.style={draw,inner sep=11.5,yshift=-4}, every label/.style={rectangle,draw=none,inner sep=6.0}, every fit/.append style=text badly centered]
\end{scope}
\end{tikzpicture}
   \caption{A Coxeter graph of type $\tilde D_n$.}
  \label{fig:tDn}
\end{figure}

\begin{breakmatching}[$\varphi_d$-matching on $K_n = K_{\W_{\!\tilde D_n}}$ for $d$ odd]
  \begin{enumerate}[(a)]
    \item If $\sigma = \{1,2,\dots, n\}$, then $\sigma$ is critical.
    \item If $1 \not\in \sigma$, relabel vertices $\{n, n-1, \dots, 3,2,0\}$ as $\{1, 2, \dots, n\}$ and generate the matching as in $K^D_n$.
    \item In all remaining cases, match $\sigma$ with $\sigma \xor 0$.
  \end{enumerate}

  \label{matching:tDn-odd}
\end{breakmatching}

\begin{matching}[$\varphi_d$-matching on $K_n = K_{\W_{\!\tilde D_n}}$ for $d$ even]
  For $n=4$ and $d\leq 6$, we construct the matching separately as follows.
  \begin{itemize}
   \item Case $n=4$, $d=2$.
   If $|\sigma|=1$ and $2\not\in\sigma$, or $|\sigma| = 2$, or $|\sigma|=3$ and $2\in\sigma$, then match $\sigma$ with $\sigma \xor 2$.
   Otherwise, $\sigma$ is critical.
   
   \item Case $n=4$, $d=4$.
   If $2\not\in\sigma$ or $\sigma \cap \{1,3,4\} = \varnothing$, then match $\sigma$ with $\sigma \xor 0$.
   Otherwise, $\sigma$ is critical.
   
   \item Case $n=4$, $d=6$.
   Match $\sigma$ with $\sigma \xor 0$, except in the following two cases: $2\in\sigma$, $0\not\in\sigma$ and $|\sigma| \geq 3$; or, $\{0,2\}\subseteq\sigma$ and $|\sigma| = 4$.
  \end{itemize}
  In the remaining cases ($n\geq 5$ or $d \geq 8$), the matching is constructed as follows.
  \begin{enumerate}[(a)]
    \item If $1\not\in\sigma$, relabel vertices $\{n, n-1, \dots, 3,2,0\}$ as $\{1,2,\dots,n\}$ and construct the matching as in $K^D_n$.
    
    \item Otherwise, if $d=2$ and $\{0,1,2,3\} \nsubseteq \sigma$, proceed as follows.
    \begin{enumerate}
      \item[(b1)] If $\{0,1,3\}\subseteq\sigma$, match $\sigma$ with $\sigma \xor 4$ if $\{5,6,\dots,n\} \nsubseteq \sigma$, else $\sigma$ is critical.
      \item[(b2)] Otherwise, if $\{1,3,4,\dots,n\} \nsubseteq \sigma$ then match $\sigma$ with $\sigma\xor 2$, else $\sigma$ is critical.
    \end{enumerate}
    
    \item Otherwise, if $d\geq 4$ and $0\not\in\sigma$, proceed as follows.
    \begin{enumerate}
      \item[(c1)] If $\{1,2,\dots,\frac d2\} \subseteq \sigma$, relabel vertices $\{n, n-1, \dots, 2,1\}$ as $\{1,2,\dots, n\}$ and construct the matching as in $K^D_{n, \, \frac d2}$.
      \item[(c2)] Otherwise, if $n = \frac d2 + 1$ and $\sigma = \{1,2,\dots, n-2, n\}$, then $\sigma$ is critical.
      \item[(c3)] Otherwise, if $\sigma = \{1,2,\dots, n\}$, then $\sigma$ is critical.
      \item[(c4)] Otherwise, match $\sigma$ with $\sigma \cup \{0\}$.
    \end{enumerate}
    
    \item Otherwise, if $d \geq 4$ and $2\not\in\sigma$, match $\sigma$ with $\sigma \xor 0$.
    
    \item Otherwise, if $d = 4$ and $3\not\in\sigma$, ignore vertices $0,1,2$, relabel vertices $\{n,\allowbreak n-1,\allowbreak \dots,\allowbreak 4\}$ as $\{1,2,\dots,n-3\}$ and construct the matching as in $K^D_{n-3}$.
    
    \item Otherwise, if $d\geq 6$ and $3\not\in\sigma$, match $\sigma$ with $\sigma \xor 0$.
    
    \item Otherwise, proceed as follows.
    Recall that at this point $\{0,1,2,3\} \subseteq \sigma$.
    Let $k\geq 4$ be the size of the leftmost connected component $\Gamma_0(\sigma)$ of the subgraph $\Gamma(\sigma) \subseteq \Gamma$ induced by $\sigma$.
    Notice that $\{0,1,\dots,k-1\} \subseteq \sigma$, unless $k=n$ and $\sigma = \{0,1,\dots, n-2, n\}$.
    Similarly to Matching \ref{matching:Dn-even}, write $k = q\frac d2 + r$ where:
    \[ \begin{cases}
      0 < r < \frac d2 & \text{if $k\not\equiv 0 \pmod {\frac d2}$}; \\
      r \in \{0, \frac d2\} \text{ and $q$ even} & \text{if $k\equiv 0 \pmod {\frac d2}$}.
    \end{cases} \]
    Define a vertex $v$ as follows:
    \[ v = \begin{cases}
            q\frac d2 & \text{if $q$ is even}; \\
            q\frac d2 + 1 & \text{if $q$ is odd}.
           \end{cases} \]
    
    \begin{enumerate}
      \item[(g1)] If $d=4$, $q$ odd and $r=1$, proceed as follows.
      \begin{enumerate}
	\item[(g1.1)] If $k\leq n-2$, ignore vertices $0,\dots,k$, relabel vertices $\{n,\allowbreak n-1,\allowbreak \dots,\allowbreak k+1\}$ as $\{1,2,\dots,n-k\}$, and construct the matching as in $K^D_{n-k}$.
	
	\item[(g1.2)] Otherwise, $\sigma$ is critical.
      \end{enumerate}
      
      \item[(g2)] Otherwise, if $\sigma = \{0,1,\dots, n-2, n\}$ and $v\geq n-1$, then $\sigma$ is critical.
      
      \item[(g3)] Otherwise, if $v\in\sigma$, match $\sigma$ with $\sigma \xor v$.
      
      \item[(g4)] Otherwise, if $v > n$, then $\sigma$ is critical.
      
      \item[(g5)] Otherwise, proceed as follows.
      Let $c$ be the size of the (possibly empty) connected component $C = \Gamma_{v+1}(\sigma)$ of the vertex $v+1$, in the subgraph $\Gamma(\sigma) \subseteq \Gamma$ induced by $\sigma$.
      Let
      \[ \ell = \begin{cases}
		  \frac d2 & \text{if $q$ even}; \\
		  \frac d2 - 2 & \text{if $q$ odd}.
                \end{cases} \]
      \begin{enumerate}
	\item[(g5.1)] If $\{n-1, n\} \subseteq C$, then $\sigma$ is critical.
	
	\item[(g5.2)] Otherwise, if $c < \ell$, match $\sigma$ with $\sigma \cup \{v\}$.
	
	\item[(g5.3)] Otherwise, if $c = \ell$, $n-1\not\in C$ and $n \in C$, then $\sigma$ is critical.
	
	\item[(g5.4)] Otherwise, ignore vertices $0,1,\dots,v-1$, relabel vertices $\{n,n-1,\dots,v+1\}$ as $\{1,2,\dots, n-v\}$ and construct the matching as in $K^D_{n-v, \,\ell}$.
      \end{enumerate}
    \end{enumerate}
  \end{enumerate}

  \label{matching:tDn-even}
\end{matching}

\begin{lemma}
  Matchings \ref{matching:tDn-odd} and \ref{matching:tDn-even} are $\varphi_d$-precise.
  In addition, critical simplices of Matching \ref{matching:tDn-odd} are as in Table \ref{table:tDn-critical-odd}.
\end{lemma}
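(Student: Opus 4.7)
The plan is to follow the template established by the proofs of Lemma \ref{lemma:matching-Dn} and Lemma \ref{lemma:matching-tBn}. For each of Matching \ref{matching:tDn-odd} and Matching \ref{matching:tDn-even}, I would verify (i) acyclicity, (ii) the $\varphi_d$-weighted property, and (iii) preciseness by showing the quantity $|\sigma| - v_{\varphi_d}(\sigma)$ is constant on the set of critical simplices. Since preciseness in the version of \cite[Theorem 5.1]{paolini2017weighted} we are using only requires equality of weights across nonzero incidence pairs of the Morse complex, this stronger constancy on all critical simplices is actually enough: the difference $|\sigma|-|\tau|=1$ along any edge of the Morse complex, and $v_{\varphi_d}(\sigma)-v_{\varphi_d}(\tau)$ is determined by the invariant.

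For Matching \ref{matching:tDn-odd}, I would split simplices by the rule that applies, namely (a), (b), or (c), and use the Patchwork Theorem \cite[Theorem 11.10]{kozlov2008combinatorial} with the obvious poset map $\eta$ whose fibers are preserved by the matching. Case (c), the pairing $\sigma \leftrightarrow \sigma \xor 0$, is clearly acyclic and contributes no critical simplex. Case (b) embeds as the $\varphi_d$-precise matching on $K_n^D$ from Lemma \ref{lemma:matching-Dn}, so acyclicity, $\varphi_d$-weightedness, and the enumeration/invariant of its critical simplices come from Table \ref{table:Dn-critical-odd}. For $d$ odd the relabeling $\{n,n-1,\dots,2,0\} \mapsto \{1,\dots,n\}$ preserves both the graph structure (since vertex $0$ and vertex $1$ play symmetric roles in the Coxeter graph of $\tilde D_n$) and the $\varphi_d$-weight, and toggling $0$ in case (c) alters only the size of the component containing $0$ which stays within an $A$-piece whose $\varphi_d$-weight for $d$ odd does not change because of the formula $\omega_{\varphi_d}(A_n) = \lfloor (n+1)/d \rfloor$ (Section \ref{sec:weights}) together with the specific structure forced by not being in case (a) or (b). The single critical simplex $\{1,2,\dots,n\}$ of case (a) can then be combined with those of case (b) to produce Table \ref{table:tDn-critical-odd}, and by direct computation of $|\sigma|-v_{\varphi_d}(\sigma)$ for each it equals a single value.

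For Matching \ref{matching:tDn-even}, I would first dispatch the base cases $n=4$ with $d\in\{2,4,6\}$ by direct inspection (finite check). In the generic case, acyclicity again follows from the Patchwork Theorem applied to the stratification by cases (a)--(g), where each sub-case either consists of explicit $\sigma \leftrightarrow \sigma \xor v$ pairs (acyclic by construction), declares certain simplices critical, or recurses into a $\varphi_d$-precise matching on $K_n^D$, $K_{n,g}^D$, or $K_{n-f-1, d-2-f, g}^A$ whose acyclicity is granted by Lemma \ref{lemma:matching-Dn} and Lemma \ref{lemma:matching-An}. The $\varphi_d$-weighted property is verified case by case using the weight formulas of Section \ref{sec:weights}: the delicate cases are (c), (d)--(f), and especially (g), where the choice $v = q\frac d2$ or $v = q\frac d2 + 1$ depending on the parity of $q$ is precisely engineered so that toggling $v$ shifts $k$ to $k\pm 1$ without changing $\omega_{\varphi_d}$ of the resulting $A$-type component (the floor $\lfloor k/(d/2)\rfloor$ is constant on each residue block). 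Subcases (g5.1)--(g5.4) are exactly the situations where the naive toggle of $v$ would change the weight, and the matching either declares the simplex critical or recurses into a lower-dimensional $K_{n-v,\ell}^D$ matching whose weighted property is already established.

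Preciseness is then obtained by listing the critical simplices arising from each clause and verifying that $|\sigma| - v_{\varphi_d}(\sigma)$ takes a common constant value; the targets to hit are $n - 2\lfloor n/d\rfloor$ in the odd case and an analogous value in the even case (obtained from the finite $D$-subdiagram weight formula applied to the relevant parabolic). The main obstacle will be bookkeeping for case (g) of Matching \ref{matching:tDn-even}: one must show, using the relation between $k$, $q$, $r$ and the size of the right-hand component $C$, that in every recursive branch the induced matching from Lemma \ref{lemma:matching-Dn} (on $K^D_{n-v,\ell}$) has its own critical simplices assemble with the already-critical left portion $\{0,1,\dots,v-1\}$ into simplices whose invariant $|\sigma|-v_{\varphi_d}(\sigma)$ matches that of all other critical simplices. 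This is essentially a careful arithmetic check across the disjoint union of connected components, using the additivity $v_{\varphi_d}(\sigma) = \sum_i v_{\varphi_d}(\Gamma_i(\sigma))$ and the weight formulas from Section \ref{sec:weights}.
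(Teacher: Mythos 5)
Your overall architecture (Patchwork Theorem for acyclicity, weight formulas from Section \ref{sec:weights} for the weighted property, reduction to the already-proved matchings on $K_n^D$ and $K^A_{n,f,g}$) matches the paper's, but your preciseness argument has a genuine gap. You claim that it suffices to check that $|\sigma|-v_{\varphi_d}(\sigma)$ is constant on the critical simplices, with target value $n-2\lfloor n/d\rfloor$ in the odd case. This is false for $\smash{\tilde D_n}$: Table \ref{table:tDn-critical-odd} records \emph{two distinct} values of the invariant when $n\equiv 0$ or $1 \pmod d$. Concretely, for $d$ odd the simplex $\bar\sigma=\{1,\dots,n\}$ is always critical with $|\bar\sigma|-v_{\varphi_d}(\bar\sigma)=n-\lfloor n/d\rfloor$, while the two critical simplices $\bar\tau_1,\bar\tau_2$ inherited from $K_n^D$ (present when $n\equiv 0,1\pmod d$) have invariant $n-2\frac nd$ or $n-2\frac{n-1}d-1$. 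The constancy criterion you invoke — which does work for $D_n$ and $\smash{\tilde B_n}$ — therefore cannot close the argument here.

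What the paper actually does is check the preciseness condition pair by pair. When $\frac nd\geq 2$ (resp.\ $n\equiv 1\pmod d$), one has $|\bar\tau_1|\leq|\bar\sigma|-2$, so $[\bar\sigma:\bar\tau_i]^{\mathcal M}=0$ for dimension reasons and only the pair $(\bar\tau_1,\bar\tau_2)$ matters, where the weights do differ by exactly $1$. The delicate case is $n=d$, where $|\bar\tau_1|=|\bar\sigma|-1$ but $v_{\varphi_d}(\bar\sigma)=v_{\varphi_d}(\bar\tau_1)=1$; preciseness then forces you to prove $[\bar\sigma:\bar\tau_1]^{\mathcal M}=0$, which the paper does by exhibiting the exactly two alternating paths from $\{1,\dots,n\}$ to $\{0,2,3,\dots,n-1\}$ and showing their contributions cancel. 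Your proposal contains no mechanism for this kind of incidence-number cancellation, so as written it would not establish preciseness of Matching \ref{matching:tDn-odd}. (For Matching \ref{matching:tDn-even} the paper itself omits the verification as ``much more involved,'' so your high-level plan there is not unreasonable, but the same caveat applies: you cannot assume the invariant is constant across all critical simplices and must be prepared to compute incidence numbers between critical simplices in adjacent dimensions whose weights agree.)
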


\begin{table}[htbp]
  \begin{center}
    {
    \tabulinesep=4pt
    \begin{tabu}{c|c|c}
      Case & \# Critical & $|\sigma| - v_\varphi(\sigma)$ \\
      \hline \hline
      
      $n\equiv 0 \pmod d$ & \multirow{2}{*}{$3$} & $n - \frac nd$ (once), \, $n - 2\,\frac{n}{d}$ (twice) \\
      \cline{1-1}\cline{3-3}
      $n \equiv 1 \pmod d$ & & $n - \frac{n-1}{d}$ (once), \, $n - 2\,\frac{n-1}{d} - 1$ (twice) \\
      \cline{1-3}
      else & $1$ & $n - \lfloor \frac nd \rfloor$ \\
      \hline
    \end{tabu}}
  \end{center}
  \vskip0.3cm
  \caption{Critical simplices of Matching \ref{matching:tDn-odd} (case $\tilde D_n$, $d$ odd).}
  \label{table:tDn-critical-odd}
\end{table}

\begin{proof}[Sketch of proof]
  We only discuss the critical simplices of Matching \ref{matching:tDn-odd} (i.e.\ the case $d$ odd), and see why this matching is precise.
  The check for Matching \ref{matching:tDn-even} is much more involved and will be omitted.
  
  Following the definition of Matching \ref{matching:tDn-odd}, one critical simplex is always given by $\bar\sigma = \{1,2,\dots, n\}$.
  It has size $|\bar\sigma| = n$ and weight $v_{\varphi}(\bar\sigma) = \left\lfloor \frac nd \right\rfloor$.
  The other critical simplices arise from the matching on $K_n^D$, and therefore from the matching on $K_{n-1}^A$. There are two of them (which we denote by $\bar\tau_1$ and $\bar\tau_2$) when $n \equiv 0,1 \pmod d$, and zero otherwise.
  
  If $n\equiv 0 \pmod d$, the sizes of $\bar\tau_1$ and $\bar\tau_2$ are $n - 2\frac nd + 1$ and $n - 2 \frac nd$, and their weights are $1$ and $0$, respectively.
  For $\frac nd \geq 2$ we have that $|\bar\tau_1| \leq |\sigma| - 2$, so the incidence number between $\bar\sigma$ and any of $\bar\tau_1$ and $\bar\tau_2$ is zero.
  This is enough to conclude that the matching is $\varphi_d$-precise, since $|\tau_1| - v_\varphi(\tau_1) = |\tau_2| - v_\varphi(\tau_2)$.
  The remaining case is $\frac nd = 1$, i.e.\ $n=d$.
  Here we have $\bar \tau_1 = \{ 0,2,3\dots, n-2,n-1 \}$ and $\bar \tau_2 = \{0,2,3,\dots,n-2\}$.
  There are exactly two alternating paths from $\bar\sigma$ to $\bar\tau_1$:
  \begin{itemize}
    \item $\{ 1,2,\dots, n \} \vartriangleright \{ 1,2,\dots, n-1 \} \vartriangleleft \{ 0,1,\dots, n-1\} \vartriangleright \{ 0, 2,3, \dots, n-1 \}$;
    
    \item $\{1,2,\dots, n\} \vartriangleright \{1,2,\dots, n-2, n\} \vartriangleleft \{0,1,\dots, n-2, n\} \vartriangleright \{ 0,2,3,\dots, n-2,\allowbreak n\} \vartriangleleft \{ 0,2,3,\dots, n\} \vartriangleright \{ 0,2,3,\dots, n-1\}$.
  \end{itemize}
  These two paths give opposite contributions to the incidence number $[\bar\sigma:\bar\tau_1]^\M$, so $[\bar\sigma:\bar\tau_1]^\M = 0$.
  Therefore the matching is $\varphi_d$-precise.
  
  If $n\equiv 1 \pmod d$, the sizes of $\bar\tau_1$ and $\bar\tau_2$ are $n-2\frac{n-1}d$ and $n-2\frac{n-1}d - 1$, and their weights are $1$ and $0$, respectively.
  Then $|\bar\tau_1| \leq |\bar\sigma| - 2$, so the incidence number between $\sigma$ and any of $\bar\tau_1$ and $\bar\tau_2$ is always zero.
\end{proof}

Table \ref{table:homology-tDn} shows the local homology in the case $\smash{\tilde D_n}$ for $n \leq 9$, computed using the software library \cite{paolini2017weighted}.
We employ the notation $\{d\} = R / (\varphi_d)$, as in \cite{de1999arithmetic, de2001arithmetic}.

\begin{table}
  \begin{center}
  {\def\arraystretch{1.25}
  \begin{tabular}{c|c|c|c|c|c|c|}
      \cline{2-7}
      \rule{0pt}{13pt} & $\tilde D_4$ & $\tilde D_5$ & $\tilde D_6$ & $\tilde D_7$ & $\tilde D_8$ & $\tilde D_9$ \\
      \hline
      \multicolumn{1}{|c|}{$H_0$} & $\{2\}$ & $\{2\}$ & $\{2\}$ & $\{2\}$ & $\{2\}$ & $\{2\}$ \\
      \hline
      \multicolumn{1}{|c|}{$H_1$} & $\{3\}$ & $0$ & $0$ & $0$ & $0$ & $0$ \\
      \hline
      \multicolumn{1}{|c|}{$H_2$} & $\{4\}^3$ & $\{4\}$ & $\{3\}$ & $\{3\}$ & $0$ & $0$ \\
      \hline
      \multicolumn{1}{|c|}{$H_3$} & $m_{\tilde D_4}$ & $\{5\}$ & $\{5\}$ & $0$ & $0$ & $\{3\}$ \\
      \hline
      \multicolumn{1}{|c|}{$H_4$} & $R$ & $m_{\tilde D_5}$ & $\{4\} \oplus \{6\}^{3}$ & $\{4\}^{3} \oplus \{6\}$ & $\{4\}^{3}$ & $\{4\}$ \\
      \hline
      \multicolumn{1}{|c|}{$H_5$} & \graycell & $R$ & $m_{\tilde D_6}$ & $\{4\}^{4} \oplus \{7\}$ & $\{4\}^{2} \oplus \{7\}$ & $0$ \\
      \hline
      \multicolumn{1}{|c|}{$H_6$} & \multicolumn{2}{c|}{\graycell} & $R$ & $m_{\tilde D_7}$ & $\{4\}^{2} \oplus \{6\} \oplus \{8\}^{3}$ & $\{8\}$ \\
      \hline
      \multicolumn{1}{|c|}{$H_7$} & \multicolumn{3}{c|}{\graycell} & $R$ & $m_{\tilde D_8}$ & $\{6\} \oplus \{9\}$ \\
      \hline
      \multicolumn{1}{|c|}{$H_8$} & \multicolumn{4}{c|}{\graycell} & $R$ & $m_{\tilde D_9}$ \\
      \hline
      \multicolumn{1}{|c|}{$H_9$} & \multicolumn{5}{c|}{\graycell} & $R$ \\
      \hline
  \end{tabular}
  }
  \end{center}
  
  \begin{align*}
    m_{\tilde D_4} &= \{2\}^4 \oplus \{4\}^3 \oplus \{6\}^3 \\
    m_{\tilde D_5} &= \{2\}^{2} \oplus \{4\} \oplus \{6\} \oplus \{8\}^{3} \\
    m_{\tilde D_6} &= \{2\}^{5} \oplus \{4\}^{2} \oplus \{6\}^{3} \oplus \{8\} \oplus \{10\}^{3} \\
    m_{\tilde D_7} &= \{2\}^{3} \oplus \{4\}^{5} \oplus \{6\} \oplus \{8\} \oplus \{10\} \oplus \{12\}^{3} \\
    m_{\tilde D_8} &= \{2\}^{6} \oplus \{4\}^{4} \oplus \{6\}^{2} \oplus \{8\}^{3} \oplus \{10\} \oplus \{12\} \oplus \{14\}^{3} \\
    m_{\tilde D_9} &= \{2\}^{4} \oplus \{4\}^{2} \oplus \{6\}^{2} \oplus \{8\} \oplus \{10\} \oplus \{12\} \oplus \{14\} \oplus \{16\}^{3}
  \end{align*}
  
  \vskip0.3cm
  \caption{Homology in the case $\tilde D_n$ for $n \leq 9$.}
  \label{table:homology-tDn}
\end{table}

\section{Case \texorpdfstring{$I_2(m)$}{I2(m)}}
\label{sec:I2}

In this section we consider the case $I_2(m)$ for $m\geq 5$ (see Figure \ref{fig:I2}).
The Poincaré polynomial is given by $\W_{\! I_2(m)} = [2]_q [m]_q$.
Then the $\varphi_d$-weight is
\[ \omega_{\varphi_d}(I_2(m)) =
  \begin{cases}
    2 & \text{if } d=2 \text{ and $m$ even}; \\
    1 & \text{if } d=2 \text{ and $m$ odd}; \\
    1 & \text{if } d \geq 3 \text{ and } d\mid m; \\
    0 & \text{if } d \geq 3 \text{ and } d\nmid m.
  \end{cases}
\]

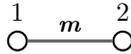
\begin{figure}[htbp]
  \begin{tikzpicture}
  \begin{scope}[every node/.style={circle,thick,draw,inner sep=2.5}, every label/.style={rectangle,draw=none}]
    \node (1) at (0.0,0) [label={above,minimum height=13}:$1$] {};
    \node (2) at (1.4,0) [label={above,minimum height=13}:$2$] {};
  \end{scope}
  \begin{scope}[every edge/.style={draw=black!60,line width=1.2}]
    \path (1) edge node {} node[inner sep=3, above] {\bf\small \textit{m}} (2);
  \end{scope}
  \end{tikzpicture}
  \caption{A Coxeter graph of type $I_2(m)$.}
  \label{fig:I2}
\end{figure}

In this case $\varphi_d$-precise matchings are easy to construct by hand.
As a straightforward consequence we also obtain the homology groups $H_*(\XW; R)$.

\begin{breakmatching}[$\varphi_d$-matching on $K_{\W_{\! I_2(m)}}$]
  \begin{itemize}
    \item If $d=2$ and $m$ is even, every simplex is critical.
    Critical simplices are then: $\{1,2\}$ (size $2$, weight $2$), $\{1\}$, $\{2\}$ (size $1$, weight $1$), and $\varnothing$ (size $0$, weight $0$).
    By Theorem \ref{thm:homology-artin-groups}, the homology groups are: $H_0(\XW;R)_{\varphi_2} \cong \frac{R}{(\varphi_2)}$, and $H_1(\XW;R)_{\varphi_2} \cong \frac{R}{(\varphi_2)}$.
    
    \item If $d=2$ and $m$ is odd, match $\{1,2\}$ with $\{1\}$ (both simplices have weight $1$).
    The critical simplices are: $\{2\}$ (size $1$, weight $1$), and $\varnothing$ (size $0$, weight $0$).
    The homology groups are: $H_0(\XW;R)_{\varphi_2} \cong \frac{R}{(\varphi_2)}$, and $H_1(\XW;R)_{\varphi_2} \cong 0$.
    
    \item If $d\geq 3$ and $d \mid m$, match $\{2\}$ with $\varnothing$ (both simplices have weight $0$).
    The critical simplices are: $\{1,2\}$ (size $2$, weight $1$), and $\{1\}$ (size $1$, weight $0$).
    The homology groups are: $H_0(\XW;R)_{\varphi_d} \cong 0$, and $H_1(\XW;R)_{\varphi_d} \cong \frac{R}{(\varphi_d)}$.
    
    \item If $d\geq 3$ and $d\nmid m$, match $\{1,2\}$ with $\{1\}$ and $\{2\}$ with $\varnothing$ (all simplices have weight $0$).
    There are no critical simplices, and all homology groups are trivial.
  \end{itemize}
  \label{matching:I2}
\end{breakmatching}

To summarize, the local homology is given by:
\[ H_0(\XW; R) \cong \frac{R}{(\varphi_2)}; \qquad H_1(\XW; R) \cong \bigoplus_{\substack{
    d \mid m\\
    d \geq 2}} \frac{R}{(\varphi_d)}. \]
This result corrects the one given in \cite{de1999arithmetic}, where proper divisors of $m$ were not taken into account.

\section{Exceptional cases}
\label{sec:exceptional-cases}

In all exceptional finite and affine cases (see for example \cite[Appendix A1]{bjorner2005combinatorics} for a classification), we constructed precise matchings by means of a computer program.
The explicit description of these matchings, together with proof of preciseness and homology computations, can be obtained through the software library available online \cite{precise-matchings}.

The homology groups can be computed using Theorem \ref{thm:homology-artin-groups}.
They are described in Tables \ref{table:homology-finite} and \ref{table:homology-affine}, where again we employ the notation $\{d\} = R / (\varphi_d)$.
We recover the results of \cite{de1999arithmetic} (for the finite cases) and \cite{salvetti2013combinatorial} (for the affine cases), except for minor corrections in the cases $E_8$ and $\smash{\tilde E_8}$.

\begin{table}
  \begin{center}
  {\def\arraystretch{1.25}
  \begin{tabular}{c|c|c|c|c|c|c|}
      \cline{2-7}
      \rule{0pt}{13pt} & $H_3$ & $H_4$ & $F_4$ & $E_6$ & $E_7$ & $E_8$ \\
      \hline
      \multicolumn{1}{|c|}{$H_0$} & $\{2\}$ & $\{2\}$ & $\{2\}$ & $\{2\}$ & $\{2\}$ & $\{2\}$ \\
      \hline
      \multicolumn{1}{|c|}{$H_1$} & $0$ & $0$ & $\{2\}$ & $0$ & $0$ & $0$ \\
      \hline
      \multicolumn{1}{|c|}{$H_2$} & $m_{H_3}$ & $0$ & $\{2\} \oplus \{3\} \oplus \{6\}$ & $0$ & $0$ & $0$  \\
      \hline
      \multicolumn{1}{|c|}{$H_3$} & \graycell & $m_{H_4}$ & $m_{F_4}$ & $0$ & $0$ & $0$ \\
      \hline
      \multicolumn{1}{|c|}{$H_4$} & \multicolumn{3}{c|}{\graycell} & $\{6\} \oplus \{8\}$ & $\{6\}$ & $\{4\}$ \\
      \hline
      \multicolumn{1}{|c|}{$H_5$} & \multicolumn{3}{c|}{\graycell} & $m_{E_6}$ & $\{6\}$ & $0$ \\
      \hline
      \multicolumn{1}{|c|}{$H_6$} & \multicolumn{4}{c|}{\graycell} & $m_{E_7}$ & $\{8\} \oplus \{12\}$ \\
      \hline
      \multicolumn{1}{|c|}{$H_7$} & \multicolumn{5}{c|}{\graycell} & $m_{E_8}$ \\
      \hline
  \end{tabular}
  }
  \end{center}
  
  \begin{align*}
    m_{H_3} &= \{2\} \oplus \{6\} \oplus \{10\} \\
    m_{H_4} &= \{2\} \oplus \{3\} \oplus \{4\} \oplus \{5\} \oplus \{6\} \oplus \{10\} \oplus \{12\} \oplus \{15\} \oplus \{20\} \oplus \{30\} \\
    m_{F_4} &= \{2\} \oplus \{3\} \oplus \{4\} \oplus \{6\} \oplus \{8\} \oplus \{12\} \\
    m_{E_6} &= \{3\} \oplus \{6\} \oplus \{9\} \oplus \{12\} \\
    m_{E_7} &= \{2\} \oplus \{6\} \oplus \{14\} \oplus \{18\} \\
    m_{E_8} &= \{2\} \oplus \{3\} \oplus \{4\} \oplus \{5\} \oplus \{6\} \oplus \{8\} \oplus \{10\} \oplus \{12\} \oplus \{15\} \oplus \{20\} \\
    &\quad\; \oplus \{24\} \oplus \{30\}
  \end{align*}
  
  \vskip0.3cm
  \caption{Homology in the exceptional finite cases.}
  \label{table:homology-finite}
\end{table}

\begin{table}
  \begin{center}
  {\def\arraystretch{1.25}
  \begin{tabular}{c|c|c|c|c|c|c|}
      \cline{2-7}
      \rule{0pt}{13pt} & $\tilde I_1$ & $\tilde G_2$ & $\tilde F_4$ & $\tilde E_6$ & $\tilde E_7$ & $\tilde E_8$ \\
      \hline
      \multicolumn{1}{|c|}{$H_0$} & $\{2\}$ & $\{2\}$ & $\{2\}$ & $\{2\}$ & $\{2\}$ & $\{2\}$ \\
      \hline
      \multicolumn{1}{|c|}{$H_1$} & $R$ & $\{2\} \oplus \{3\}$ & $\{2\}$ & $0$ & $0$ & $0$ \\
      \hline
      \multicolumn{1}{|c|}{$H_2$} & \graycell & $R$ & $\{2\} \oplus \{3\}$ & $0$ & $0$ & $0$ \\
      \hline
      \multicolumn{1}{|c|}{$H_3$} & \multicolumn{2}{c|}{\graycell} & $m_{\tilde F_4}$ & $\{3\}$ & $\{3\}$ & $0$ \\
      \hline
      \multicolumn{1}{|c|}{$H_4$} & \multicolumn{2}{c|}{\graycell} & $R$ & $\{5\} \oplus \{8\}$ & $0$ & $\{4\}$ \\
      \hline
      \multicolumn{1}{|c|}{$H_5$} & \multicolumn{3}{c|}{\graycell} & $m_{\tilde E_6}$ & $0$ & $0$ \\
      \hline
      \multicolumn{1}{|c|}{$H_6$} & \multicolumn{3}{c|}{\graycell} & $R$ & $m_{\tilde E_7}$ & $\{5\} \oplus \{8\}$ \\
      \hline
      \multicolumn{1}{|c|}{$H_7$} & \multicolumn{4}{c|}{\graycell} & $R$ & $m_{\tilde E_8}$ \\
      \hline
      \multicolumn{1}{|c|}{$H_8$} & \multicolumn{5}{c|}{\graycell} & $R$ \\
      \hline
  \end{tabular}
  }
  \end{center}
  
  \begin{align*}
    m_{\tilde F_4} &= \{2\}^{2} \oplus \{3\} \oplus \{4\} \oplus \{8\} \\
    m_{\tilde E_6} &= \{2\} \oplus \{3\}^{3} \oplus \{6\}^{2} \oplus \{9\}^{2} \oplus \{12\}^{2} \\
    m_{\tilde E_7} &= \{2\}^{3} \oplus \{3\} \oplus \{4\} \oplus \{6\} \oplus \{8\} \oplus \{10\} \oplus \{14\} \oplus \{18\} \\
    m_{\tilde E_8} &= \{2\}^{2} \oplus \{3\} \oplus \{4\} \oplus \{5\} \oplus \{8\} \oplus \{9\} \oplus \{14\}
  \end{align*}
  
  \vskip0.3cm
  \caption{Homology in the exceptional affine cases.}
  \label{table:homology-affine}
\end{table}

\section*{Acknowledgements}
I would like to thank Mario Salvetti for his strong support and help during my PhD studies.

\bibliographystyle{amsalpha-abbr}
\bibliography{bibliography}

\end{document}